\documentclass[11pt]{amsart}

\usepackage[a4paper,hmargin=3cm,vmargin=3cm]{geometry}
\usepackage{amsfonts,amssymb,amscd,amstext}
\usepackage{graphicx}
\usepackage[dvips]{epsfig}
\usepackage[latin1]{inputenc}

%% PARA MODIFICAR LAS CABECERAS Y LOS PIES
\usepackage{fancyhdr}
\pagestyle{fancy}
\fancyhf{}
%% EDITANDO LAS CABECERAS
%\renewcommand{\headrulewidth}{0pt}

%% Tipo de Letra
%\usepackage{palatino}
\usepackage{times}

\usepackage{enumerate}
\usepackage{titlesec}
\usepackage{mathrsfs}
\usepackage{stmaryrd}

%\usepackage{showkeys}

%% Para que no corte las palabras
\pretolerance=2000
\tolerance=3000

\def\R{\mathbb{R}}

\def\L{\mathbb{L}}

\def\M{\mathbb{M}}

\newcommand{\ben}{\begin{enumerate}}
\newcommand{\bit}{\begin{itemize}}
\newcommand{\een}{\end{enumerate}}
\newcommand{\eit}{\end{itemize}}

\newcommand{\ed}{\end{document}}

\def\cA{\mathcal{A}}
\def\cU{\mathcal{U}}

\def\cW{\mathcal{W}}

\def\cZ{\mathcal{Z}}

\def\cL{\mathcal{L}}
\def\cM{\mathcal{M}}

\def\cF{\mathcal{F}}

\def\cK{\mathcal{K}}

%%%%%%%%%%%%%%%%%%%%%%%%%%%%%%%%%%%%%%%%%%%%%%%%%%%%%%%%%%%%%%%%%%%%%%%%%
%%%% NUEVOS COMANDOS PARA LATEX
\let\8=\infty \let\0=\emptyset  
\let\hat=\widehat

\let\landa=\lambda
\let\alfa=\alpha

\let\parc=\partial

\def\ep{\varepsilon}

\def\landa{\lambda}

\def\flecha{\rightarrow}
\def\esiz{\langle}
\def\esde{\rangle}

\def\S{\Sigma}

\def\cte.{\mathop{\rm cte.}\nolimits}

\def\R{\mathbb{R}}

\def\H{\mathbb{H}}
\def\S{\mathbb{S}}

\newfont{\bb}{msbm10 at 12pt}

%\def\c{\hbox{\bb C}}

%\def\S{\Sigma}

% MÃ¡rgenes

\headheight=15.03pt
\headsep 0.5cm
\topmargin 0.5cm
\textheight = 49\baselineskip
\textwidth 15cm
\oddsidemargin 0.5cm
\evensidemargin 0.5cm

\setlength{\parskip}{0.5em}

%% CAMBIANDO EL ASPECTO DE LAS SECCIONES
\titleformat{\section}%[display]
{\filcenter\bfseries\large} {\thesection{.}}{0.2cm}{}%[$\vspace*{-1.0cm}$]
%%%%%%%%%%%%%%%%%%%%%%%%%%%%%%%%%%%%%%%%%%%%%%%%%%%%%%
%% CAMBIANDO EL ASPECTO DE LAS SUBSECCIONES
\titleformat{\subsection}[runin]
{\bfseries} {\thesubsection{.}}{0.15cm}{}[.]
%%%%%%%%%%%%%%%%%%%%%%%%%%%%%%%%%%%%%%%%%%%%%%%%%%%%%%
%% CAMBIANDO EL ASPECTO DE LAS SUBSUBSECCIONES
\titleformat{\subsubsection}[runin]
{\em}{\thesubsubsection{.}}{0.15cm}{}[.]
%%%%%%%%%%%%%%%%%%%%%%%%%%%%%%%%%%%%%%%%%%%%%%%%%%%%%%

%% Caption de las figuras
\usepackage[up,bf]{caption}

\newtheorem{theorem}{Theorem}[section]
\newtheorem{lemma}[theorem]{Lemma}

\newtheorem{remark}[theorem]{Remark}
\newtheorem{corollary}[theorem]{Corollary}

\newtheorem{assertion}[theorem]{Assertion}

\theoremstyle{definition}
%\newtheorem{remark}{Remark}[section]
%\newtheorem{remark}[theorem]{Remark}
%\newtheorem{example}[theorem]{Example}
%\renewcommand{\theexample}{\!}
%\newtheorem{hacer}{Hacer:}
%\renewcommand{\theproblem}{\!\!}

%\newtheorem{definition}{Definition}[section] %\renewcommand{\thedefinition}{}
%\newtheorem{definition}[theorem]{Definition}

%\theoremstyle{remark}

 % Para poner sÃ­mbolos en los footnotes

\numberwithin{equation}{section}
\numberwithin{figure}{section}

\usepackage{color}

%%%%%%%%%%
%%%%%%%%%%
%%%%%%%%%%
%%%%%%%%%% BEGIN DOCUMENT
%%%%%%%%%%
%%%%%%%%%%

\begin{document}
\fancyhead[LO]{Analytic saddle spheres in $\S^3$ are equatorial}
\fancyhead[RE]{José A. Gálvez, Pablo Mira, Marcos P. Tassi}
\fancyhead[RO,LE]{\thepage}

\thispagestyle{empty}

%% Title
%\vspace*{7mm}
\begin{center}
{\bf \LARGE Analytic saddle spheres in $\S^3$ are equatorial}
\vspace*{5mm}

%% Authors
\hspace{0.2cm} {\Large José A. Gálvez, Pablo Mira, Marcos P. Tassi}
\end{center}

%% Addresses and finantial support
\footnote[0]{%\vspace*{-0.4cm} 
\noindent \emph{Mathematics Subject Classification}: 53A10, 53C42 \\ \mbox{} \hspace{0.25cm} \emph{Keywords}: Topological index, saddle surfaces, umbilic points, minimal surfaces, immersed spheres.}
%}

%\footnote[0]{\vspace*{-0.4cm} \emph{Mathematics Subject Classification}: 35J25, 49Q05, 53A10.}

%% Abstract, keywords, and MSC

\vspace*{7mm}

\begin{quote}
{\small
\noindent {\bf Abstract.}\hspace*{0.1cm}
A theorem by Almgren establishes that any minimal $2$-sphere immersed in $\S^3$ is a totally geodesic equator. In this paper we give a purely geometric extension of Almgren's result, by showing that any immersed, real analytic $2$-sphere in $\S^3$ that is saddle, i.e., of non-positive extrinsic curvature, must be an equator of $\S^3$. We remark that, contrary to Almgren's theorem, no geometric PDE is imposed on the surface. The result is not true for $C^{\8}$ spheres.

%\vspace*{0.1cm}
%\noindent{\bf Keywords}\hspace*{0.1cm} Complex curves.

\vspace*{0.1cm}

%\noindent{ MSC (2010): 49Q05, 53A10.}\hspace*{0.1cm}
}
\end{quote}

%%%%%%%%%%
%%%%%%%%%%
%%%%%%%%%%
%%%%%%%%%% INTRODUCTION
%%%%%%%%%%
%%%%%%%%%%

\section{Introduction}

A theorem of Almgren \cite{A}, sometimes also called the \emph{Calabi-Almgren theorem} after \cite{Ca}, establishes that any minimal $2$-sphere $\Sigma$ immersed in the unit round sphere $\S^3$ must be \emph{equatorial}, i.e., a totally geodesic $2$-sphere of $\S^3$. In this paper we show that the Calabi-Almgren theorem is a particular case of a much more general geometric result, in which one does not ask the surface to be minimal or to satisfy any other geometric PDE. We will merely impose that $\Sigma$ be diffeomorphic to $\S^2$, real analytic and \emph{saddle}, i.e., $\kappa_1\kappa_2\leq 0$ at each point of $\Sigma$, where $\kappa_1,\kappa_2$ are the principal curvatures of $\Sigma$. Note that this is indeed a wide generalization of the Calabi-Almgren theorem, since any minimal surface in a real analytic Riemannian $3$-manifold $(M^3,g)$ is trivially saddle and real analytic.

\begin{theorem}\label{th:main1}
Any immersed, real analytic saddle sphere in $\S^3$ is equatorial.
\end{theorem}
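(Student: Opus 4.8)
The plan is to exploit the saddle condition through the topology of umbilic points and the global structure of the surface. First I would consider the set of umbilic points of $\Sigma$, i.e., points where $\kappa_1=\kappa_2$. Since $\Sigma$ is saddle, $\kappa_1\kappa_2\leq 0$ everywhere, so at an umbilic point we necessarily have $\kappa_1=\kappa_2=0$; thus umbilic points are precisely the flat points where both principal curvatures vanish. Away from these points, the surface carries two well-defined, mutually orthogonal principal directions of opposite-sign curvature, giving two line fields on $\Sigma\setminus\{\text{umbilics}\}$. The key idea is to study the index of the singularities of these principal foliations at the umbilic points. A classical Poincar\'e--Hopf type argument for line fields on a sphere forces the sum of the indices to equal the Euler characteristic $\chi(\S^2)=2$.

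The crucial input is the \emph{real analyticity} of $\Sigma$. For analytic surfaces, the principal foliations near an isolated umbilic point behave like the level sets of an analytic function, and a theorem in the spirit of the index estimates for analytic (or more generally, geometric) umbilic points shows that the index of each umbilic is \emph{non-positive} when the umbilic is of this flat saddle type. This is the mechanism by which saddle-ness enters: at a flat umbilic of a saddle surface, the asymptotic directions and principal directions are constrained so that the line field cannot wind positively. Here I would invoke the local analytic structure: near an isolated umbilic, the traceless second fundamental form defines an analytic complex-valued quantity (a Hopf-type differential) whose zeros have well-defined order, and the index of the principal foliation equals a negative multiple (or non-positive value) of that order. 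Summing these non-positive indices and matching against $\chi=2$ yields a contradiction unless the umbilic set is not isolated.

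Therefore the argument must branch: either the umbilic set is a proper analytic subvariety with isolated singularities, in which case the index sum cannot reach $2$ and we get a contradiction, or else $\Sigma$ is \emph{totally umbilic}, i.e., every point is umbilic. In the latter case, since umbilics are flat points, the whole surface is totally geodesic, and the only totally geodesic spheres in $\S^3$ are the equators, finishing the proof. The remaining possibility is that the umbilic set contains a curve; here real analyticity is again essential, since the zero set of an analytic function is a locally finite union of analytic arcs and isolated points, and I would analyze the foliation index contributions along such curves, showing they too cannot produce the required positive total index. I expect the main obstacle to be precisely this local index computation at flat umbilic points: establishing rigorously that the saddle hypothesis forces the principal line field to have non-positive index at every singularity, and handling the non-isolated (curve) components of the umbilic set, where the naive Poincar\'e--Hopf counting must be replaced by a more careful argument exploiting the analytic normal form of the surface near such curves. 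This is also where the failure of the result for merely $C^\infty$ spheres becomes visible, since without analyticity the umbilic set can be pathological and the index obstruction disappears.
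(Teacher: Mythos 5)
Your global strategy coincides with the paper's: identify the umbilics of a saddle surface as the flat points where $II=0$, study the principal line fields away from them, prove that every singularity has non-positive index, and contradict Poincar\'e--Hopf on $\S^2$. However, the two steps you yourself flag as ``the main obstacle'' are exactly the content of the paper, and the one concrete mechanism you propose for the local index bound would fail. You suggest that near an umbilic ``the traceless second fundamental form defines an analytic complex-valued quantity (a Hopf-type differential) whose zeros have well-defined order, and the index of the principal foliation equals a negative multiple of that order.'' That argument is the classical one for surfaces satisfying an elliptic geometric PDE (constant mean curvature, special Weingarten, etc.), where the Hopf differential is holomorphic or satisfies a Cauchy--Riemann type inequality; for a general real analytic saddle surface there is no such PDE and no holomorphicity, so the zeros of the traceless $II$ need not have a ``well-defined order'' controlling the winding, and the index estimate does not follow this way. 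The paper's actual proof is a different analytic argument: writing $\Sigma$ locally as a graph $z=h(x,y)$ with $h_{xx}h_{yy}-h_{xy}^2\le 0$, it bounds the index of the line field generated by $\nabla h_{\nu}$ by combining the Alessandrini--Magnanini result (a positive-index isolated critical point of $h_\nu$ forces a strict local extremum) with the Hartman--Nirenberg theorem on planar maps whose Jacobian does not change sign, applied to $\sigma=(h_x,h_y)$; a homotopy of the principal-direction equation then transfers this bound to the principal line fields.

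The second gap is the non-isolated umbilic locus. The paper first proves (by a convex hull argument for saddle graphs) that any curve along which $D^2h$ vanishes is a straight segment, i.e.\ the umbilic curves of $\Sigma$ are closed geodesics of $\S^3$; it then shows the principal cross field extends real-analytically across such a curve, being tangent/normal to it, and computes the index at a singularity lying on umbilic segments by an explicit level-set and sector decomposition. Finally --- a point absent from your sketch --- one must check that the extended cross field splits \emph{globally} into two line fields on $\Sigma\setminus\{q_1,\dots,q_s\}$: when the vanishing order of $h$ along an umbilic segment is odd, $\cL_1$ and $\cL_2$ swap as one crosses it, and the global splitting only survives because each umbilic geodesic circle separates the genus-zero surface into two components, so any two transversal paths meet it with the same parity. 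Without these ingredients your outline is a correct plan but not a proof; with them, it is essentially the paper's argument.
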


It is important to remark that there exist non-equatorial $C^{\8}$-\emph{smooth} saddle spheres in $\S^3$; see Section \ref{sec:examples}. Thus, the real analyticity condition of Theorem \ref{th:main1} is necessary and sharp. Moreover, since there exist compact minimal surfaces in $\S^3$ of arbitrary genus \cite{L}, the topological hypothesis in Theorem \ref{th:main1} is also necessary.  

We also remark that the saddle condition $\kappa_1\kappa_2\leq 0$ for a surface $\Sigma$ in $\S^3$ is actually a purely geometric one, since it is equivalent to imposing locally on $\Sigma$ the \emph{convex hull property}, i.e., that for any sufficiently small neighborhood $\cW\subset \Sigma$ of any point $p\in \Sigma$, the set $\cW$ is contained in the convex hull in $\S^3$ of the boundary curve $\parc \cW$.

The proof of Theorem \ref{th:main1} relies on controlling the umbilic set  $\cU$ of any non-equatorial saddle sphere $\Sigma$ in the conditions of Theorem \ref{th:main1}, and on the description around $\cU$ of the \emph{cross field} on $\Sigma$ determined by its principal directions. We will show that, even though the umbilic set $\cU$ can contain in principle a number of real analytic arcs, maybe crossing at some \emph{meeting points}, the cross field generated  by the principal directions of the surface on $\Sigma\setminus \cU$ extends analytically to define two orthogonal real analytic line fields $\cF_1,\cF_2$ with a finite number of singularities $\{q_1,\dots, q_k\}$ on $\Sigma$. Then, we will prove that the topological index of any of these extended line fields $\cF_j$ is non-positive at each singularity $q_i$. Since $\Sigma$ has genus zero, this will contradict the Poincaré-Hopf theorem applied to $\cF_j$, showing that $\Sigma$ must be equatorial.

Theorem \ref{th:main1} is related to a number of uniqueness problems of classical surface theory, and also to more recent advances. Some of them will be discussed in Section \ref{sec:discussion}. In Section \ref{sec:examples} we will produce a simple example of a $C^{\8}$ saddle sphere immersed in $\S^3$. 
In Section \ref{sec:analytic} we will prove an analytic result that controls, up to a rotation in the $(x,y)$-coordinates, the index of the line field generated by the vector field $\nabla h_x$ around the origin for any real analytic function $h(x,y)$ satisfying the saddle condition $h_{xx}h_{yy}-h_{xy}^2\leq 0$. In Section \ref{sec:index}, this result will be used to describe, for any real analytic saddle surface $\Sigma$ in $\S^3$, the analytic extension and the topological index around any umbilic point $p$ of $\Sigma$ of the cross field generated by the principal directions of $\Sigma$ around $p$. Theorem \ref{th:main1} will follow from this description. 
\section{Discussion of the result}\label{sec:discussion}

The Calabi-Almgren theorem is related to the classification problem of minimal $2$-spheres inside Riemannian $3$-spheres $(\S^3,g)$. By the Smith-Simon theorem in \cite{Sm}, the space of minimal $2$-spheres immersed in some Riemannian $(\S^3,g)$ is always non-empty, but the actual classification of such minimal $2$-spheres is a hard problem. Regarding uniqueness, one has as a particular case of the uniqueness theory developed by the first two authors in \cite{GM3} that if a family of minimal $2$-spheres $\cM:=\{\Sigma_{\alfa}\}_{\alfa}$ in some $(\S^3,g)$ has the property that their generalized Gauss maps foliate the unit tangent bundle $TU(\S^3)$ of $\S^3$, then \emph{any} minimal $2$-sphere immersed in $(\S^3,g)$ is an element of this canonical family $\cM$. In a recent paper, Ambrozio, Marques and Neves \cite{AMN} called such a family $\cM$ a \emph{Zoll family} of minimal surfaces, and proved the existence of non-homogeneous Riemannian $3$-spheres $(\S^3,g)$ for which a Zoll family $\cM$ exists, and thus the uniqueness theorem in \cite{GM3} applies.

Note that when $g$ is the round metric on $\S^3$, the family of totally geodesic equators $\S^2\subset \S^3$ constitutes a Zoll family on $(\S^3,g)$. More generally, we have:

\begin{enumerate}
\item
Any homogeneous metric $g$ on $\S^3$ has the property that, in suitable coordinates, all the equators $\S^2\subset \S^3$ are minimal $2$-spheres in $(\S^3,g)$; they are totally geodesic only when $g$ is the round metric. See \cite{To2} for the case of Berger $3$-spheres, and \cite{AMN} for the general case.
\item
There exist non-homogeneous metrics $g$ on $\S^3$ such that all the equators $\S^2\subset \S^3$ are minimal $2$-spheres in $(\S^3,g)$. These metrics were classified in \cite{AMN}.
\item
For any metric $g$ in $\S^3$ with minimal equators, any minimal $2$-sphere immersed in $(\S^3,g)$ is an equator, by \cite{GM3} and item (1). See also \cite{AMN}, and \cite{mmpr0} for the homogeneous case.
\end{enumerate}

\emph{Theorem \ref{th:main1} gives a wide generalization of assertion (3) above.} Indeed, if $(\S^3,g)$ has the property that all the equators of $\S^3$ are minimal, it follows immediately from the maximum principle applied to the minimal surface equation (for $g$) that any minimal $2$-sphere $\Sigma$ in $(\S^3,g)$ is also saddle when viewed as a surface in the round sphere $\S^3$. Moreover, by the classification in \cite{AMN} of metrics $g$ on $\S^3$ with minimal equators, we obtain that $g$, and so $\Sigma$, is real analytic. Hence, by Theorem \ref{th:main1}, $\Sigma$ must be an equator of $\S^2$.

Almgren conceived his uniqueness theorem in \cite{A} as the natural spherical version in $\S^3$ of Bernstein's theorem according to which entire minimal graphs in $\R^3$ are planes. Similarly, Theorem \ref{th:main1} gives the natural spherical version in $\S^3$ of another related, famous theorem by Bernstein which establishes that any entire, bounded graph $z=z(x,y)$ in $\R^3$ that is saddle, i.e. $z_{xx}z_{yy}-z_{xy}^2\leq 0$, must be flat.

Alexandrov was the first to study the extension of this \emph{second Bernstein theorem} to $\S^3$. In \cite{A0}, he showed that any real analytic ovaloid in $\R^3$ whose principal curvatures satisfy the Weingarten inequality $(\kappa_1-c)(\kappa_2-c)\leq 0$ for some $c>0$ must be a round sphere of radius $1/c$. Using a projective equivalence, this implies that any real analytic saddle sphere in $\S^3\subset \R^4$ that is an entire graph over $\S^2$ must be an equator; here by an \emph{entire graph} we mean that $\Sigma$ intersects transversally all geodesic arcs of $\S^3$ joining the north and south poles. Alexandrov conjectured in \cite{Ale} that the analyticity hypothesis could be removed from this theorem. This conjecture was also formulated later on by Koutrofiotis and Nirenberg \cite{K}. In 2001, Martinez-Maure \cite{MM} found a striking, beautiful $C^2$ counterexample to this conjecture. Based on this construction, Panina found later on $C^{\8}$ counterexamples \cite{Pa1}. The umbilic set of the examples by Martinez-Maure is the union of four disjoint great semicircles. In \cite{GM6}, the first two authors showed the converse statement, i.e., that any $C^2$ ovaloid in $\R^3$ that satisfies the Weingarten inequality must be totally umbilic along four disjoint great semicircles. For other uniqueness theorems of ovaloids, or immersed spheres in $\R^3$, satisfying the Weingarten inequality, see \cite{GM3,GMT,GWZ,HNY,Mu}.

\section{Smooth saddle spheres in $\S^3$}\label{sec:examples}

Let $(x_0,x_1,x_2,x_3)$ be Euclidean coordinates in $\R^4$, and view $\S^3\subset \R^4$ as the unit sphere. Then, it is classically known that the map 
\begin{equation}\label{deffi}
\varphi (x_0,x_1,x_2,x_3)=\frac{1}{x_0}(x_1,x_2,x_3):\S_+^3\flecha \R^3
\end{equation} 
defines a totally geodesic diffeomorphism from the open hemisphere $\S_+^3 :=\S^3\cap \{x_0>0\}$ into the Euclidean space $\R^3$. That is, $\varphi$ takes geodesics of $\S_+^3$ into geodesics of $\R^3$. From this property, one sees directly that if $\Sigma$ is an immersed surface in $\S_+^3$ and $p\in \Sigma$, then the extrinsic curvature $K_e:=\kappa_1 \kappa_2$ of $\Sigma$ at $p$ has the same sign as the Gauss curvature of $\varphi (\Sigma)$ in $\R^3$ at $\varphi (p)$. In particular, $\varphi$ takes saddle surfaces of $\S_+^3$ into saddle surfaces of $\R^3$, and vice versa.

The same construction also works for the hyperbolic $3$-space $\H^3$ when viewed in its hyperboloid model $\H^3=\{x\in \L^4 : \esiz x,x\esde=-1, x_0\geq 1\}$ of Minkowski $4$-space $\L^4\equiv (\R^4,\esiz,\esde)$, where $\esiz,\esde=-dx_0^2+\sum_{i=1}^3 dx_i^2$. In this case, the map \eqref{deffi} gives a totally geodesic diffeomorphism between $\H^3$ and the unit ball of $\R^3$.

\begin{figure}[htbp]
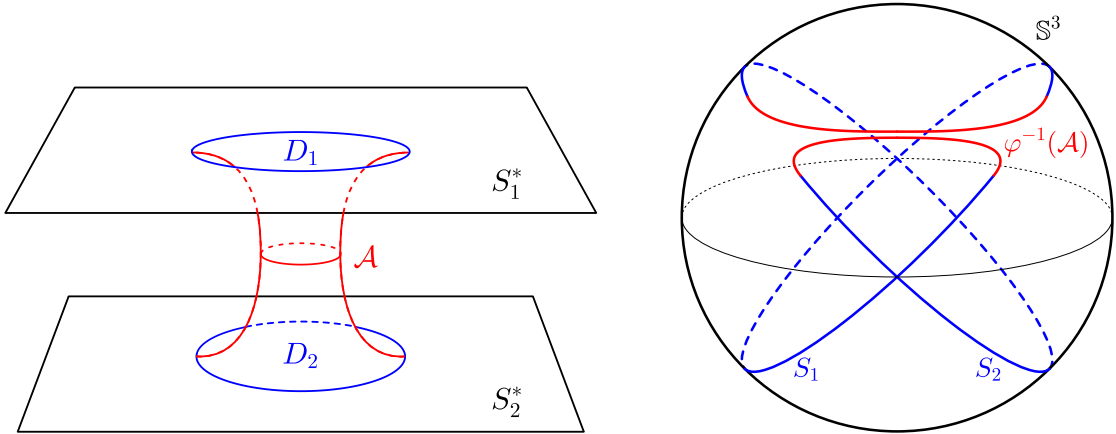

        \includegraphics[width=7.8cm]{figura1.pdf}\hspace{1cm} \includegraphics[width=5.7cm]{figura2.pdf}
     \caption{Left: the saddle annulus $\cA$ in $\R^3$ joining $S_1^*\setminus D_1$ and $S_2^*\setminus D_2$. Right: schematic view of the immersed saddle sphere $\Sigma$ in $\S^3$. } 
\label{fig:sphere}
\end{figure}

We will use the map \eqref{deffi} to construct a simple saddle sphere immersed in $\S^3$. To start, consider the equatorial $2$-spheres in $\S^3$ given by $S_1=\S^3\cap \{x_0=x_3\}$ and $S_2=\S^3\cap \{x_0=-x_3\}$. Then, by \eqref{deffi}, $$S_1^*:= \varphi(S_1\cap \S_3^+),\hspace{0.5cm} S_2^*:= \varphi(S_2\cap \S_3^+)$$ are, respectively, the planes $z=1$ and $z=-1$ in $\R^3$, where here $(x,y,z)$ are the usual Euclidean coordinates of $\R^3$. We next remove from $S_i^*$, $i=1,2$, the disk $D_i$ given by the points that satisfy $x^2+y^2<1$, and glue together $C^{\8}$-smoothly the remaining exterior domains $S_1^*\setminus D_1$ and $ S_2^*\setminus D_2$ through an embedded saddle annulus $\cA\subset \R^3$ such that $\parc \cA=\parc D_1\cup \parc D_2$. See Figure \ref{fig:sphere}.

Note that each $S_i\setminus \varphi^{-1}(D_i)$ is homeomorphic to a disk. Then, the union in $\S^3$ of $S_1\setminus \varphi^{-1}(D_1)$ and $S_2\setminus \varphi^{-1}(D_2)$ through the annulus $\varphi^{-1}(\cA)$ defines a $C^{\8}$-smooth immersed sphere $\Sigma$ in $\S^3$. Since $\varphi$ preserves saddleness and $S_1,S_2$ are totally geodesic in $\S^3$, it is clear that $\Sigma$ is a $C^{\8}$ saddle sphere in $\S^3$.

The same process using a finite number of compact saddle annuli $\cA_1,\dots, \cA_{g+1}$ in $\R^3$ clearly creates simple examples of compact saddle surfaces in $\S^3$ of any genus $g$.

\section{Analytic saddle functions and topological index}\label{sec:analytic}

\begin{lemma}\label{lem:recta}
Let $h=h(x,y)$ be a nonlinear real analytic function that satisfies $h_{xx}h_{yy}-h_{xy}^2 \leq 0$. Assume that the Hessian matrix $D^2 h$ vanishes along a curve $\Gamma\subset \R^2$. Then, $\Gamma$ is a line segment.
\end{lemma}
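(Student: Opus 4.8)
The plan is to strip away first-order information by an affine modification of $h$, reduce the vanishing of $D^2h$ on $\Gamma$ to a high-order vanishing of a single function, and then read off the curvature of $\Gamma$ from the leading transverse coefficient of $\det D^2h$.

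First I would observe that $\nabla h$ is constant along $\Gamma$: if $\gamma(t)$ parametrizes $\Gamma$, then $\tfrac{d}{dt}\nabla h(\gamma(t))=D^2h(\gamma(t))\,\gamma'(t)=0$. Hence, subtracting the affine function $\ell$ with $\nabla\ell\equiv\nabla h|_\Gamma$ and a suitable constant term, the function $u:=h-\ell$ satisfies $u=\nabla u=D^2u=0$ on $\Gamma$, while $\det D^2u=\det D^2h\leq 0$ and $u\not\equiv 0$ (since $h$ is nonlinear). Writing $\Gamma=\{F=0\}$ locally with $F$ real analytic and $\nabla F\neq 0$, the vanishing of $u$ to order $\geq 3$ along the smooth curve $\Gamma$ yields a factorization $u=F^k v$ with $v$ real analytic, $k\geq 3$ the exact vanishing order, and $v|_\Gamma\not\equiv 0$.

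The key computation is the transverse leading term of $\det D^2u$. From $u=F^k v$ one gets $D^2u=k(k-1)F^{k-2}v\,\nabla F\,\nabla F^{\top}+kF^{k-1}\big(\nabla v\,\nabla F^{\top}+\nabla F\,\nabla v^{\top}+v\,D^2F\big)+O(F^{k})$. The leading matrix is rank one, so its determinant vanishes; expanding to the next order and using that the terms involving $\nabla v$ cancel, I find
\[
\det D^2u=k^2(k-1)\,v^2\big(F_y^2F_{xx}-2F_xF_yF_{xy}+F_x^2F_{yy}\big)\,F^{2k-3}+O(F^{2k-2}).
\]
The scalar $F_y^2F_{xx}-2F_xF_yF_{xy}+F_x^2F_{yy}$ is exactly $(F_x^2+F_y^2)^{3/2}$ times the curvature of the level curve $\Gamma$.

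Finally I would invoke the sign hypothesis. Since $\nabla F\neq 0$, $F$ changes sign across $\Gamma$, so $F^{2k-3}$ also changes sign there because $2k-3$ is odd. As $\det D^2u\leq 0$ on both sides of $\Gamma$, the coefficient of $F^{2k-3}$ must vanish on $\Gamma$; and since $v|_\Gamma$ is a nonzero real-analytic function on the arc (hence nonvanishing off isolated points) and $\nabla F\neq 0$, this forces $F_y^2F_{xx}-2F_xF_yF_{xy}+F_x^2F_{yy}=0$ on a dense subset of $\Gamma$, hence on all of $\Gamma$ by continuity. Thus the curvature of $\Gamma$ vanishes identically and $\Gamma$ is a line segment. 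I expect the main obstacle to be the bookkeeping in the determinant expansion, namely the clean cancellation of the $\nabla v$ terms and the identification of the curvature numerator, together with the care needed to handle the exact vanishing order $k$ and the isolated zeros of $v$ along $\Gamma$.
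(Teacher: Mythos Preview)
Your argument is correct and genuinely different from the paper's. After the same affine normalization, the paper argues by contradiction geometrically: assuming $\Gamma$ has nonzero curvature at some point, it places $\Gamma$ locally in a half-plane $\{y\leq 0\}$ tangent to the $x$-axis, considers the saddle graph $z=h(x,y)$ over the corresponding half-disk, and uses the convex hull property for saddle surfaces to trap this graph below a tilted plane through the $x$-axis, contradicting that $z=0$ is the tangent plane at the origin. Your route is instead purely analytic: you factor $u=F^k v$ along $\Gamma$, expand $\det D^2u$ to order $F^{2k-3}$, verify the cancellation of the $\nabla v$ terms so that the leading coefficient is $k^2(k-1)v^2\,(F_y^2F_{xx}-2F_xF_yF_{xy}+F_x^2F_{yy})$, and then force this coefficient to vanish on $\Gamma$ because the odd power $F^{2k-3}$ changes sign while $\det D^2u\leq 0$ does not. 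The determinant computation is correct as you outline it, and the identification of $F_y^2F_{xx}-2F_xF_yF_{xy}+F_x^2F_{yy}$ with $|\nabla F|^{3}$ times the curvature of $\{F=0\}$ is standard. Your approach is more direct and self-contained (no appeal to the convex hull property), and it yields $\kappa\equiv 0$ on $\Gamma$ in one stroke; the paper's approach is more geometric and foreshadows the saddle/convex-hull viewpoint used elsewhere in the article. The only care needed is exactly what you flag: $k$ is finite because $h$ is nonlinear, the factorization $u=F^k v$ is local (use $F$ as a coordinate near any point of $\Gamma$), and the isolated zeros of $v|_\Gamma$ are handled by continuity.
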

\begin{proof}
Since $D^2 h$ is identically zero along $\Gamma$, we have that $\nabla h\equiv (a,b)\in \R^2$ along $\Gamma$. Therefore, after changing $h$ by $h(x,y)-ax-by-c$ for an adequate $c\in \R$ (note that this function has the same Hessian as $h$), we can assume that $(a,b)=(0,0)$ and that $h\equiv 0$ along $\Gamma$.

We want to show that $\Gamma$ is a line segment. Arguing by contradiction, assume that  $\Gamma$ has non-zero curvature at some $p\in \Gamma$. Since $h$ is real analytic and not identically zero, the nodal set $h^{-1}(0)$ is, around $p$, a finite union of real analytic planar curves passing through $p$. Thus, by choosing a different nearby point in $\Gamma$ as $p$ if necessary, we can assume that the set $h^{-1}(0)$ around $p$ is just a piece of the curve $\Gamma$.

Up to a translation and a rotation in the $(x,y)$-coordinates, we can assume that $p=(0,0)$, and so $h(0,0)=0$, and that $\Gamma$ is tangent to the $x$-axis at $(0,0)$. Moreover, since $\Gamma$ has non-zero curvature at $p$, we can also assume that for some sufficiently small $r_0>0$, the set $\Gamma\cap D(0;r_0)$ lies in the half-plane $\{y\leq 0\}$, and only intersects the $y=0$ axis at the origin.

\begin{figure}[htbp]
         \includegraphics[width=10cm]{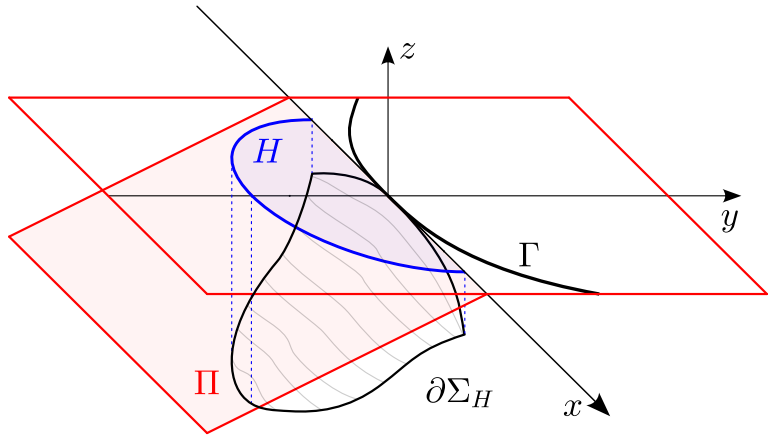} 
     \caption{The saddle graph $\Sigma_H$ and the plane $\Pi$ in the argument of Lemma \ref{lem:recta}.} 
\label{fig:saddle}
\end{figure}

Consider next the half-disk $H:= \{(x,y): x^2+y^2 \leq r_0^2, y\leq 0\}$ and the compact saddle graph with boundary $$\Sigma_H:=\{(x,y,h(x,y)): (x,y)\in H\}\subset \R^3.$$ Note that the tangent plane to $\Sigma_H$ at the origin is the $z=0$ plane. Also, $h(x,y)\neq 0$ for any point in the interior of $H$. Assume for definiteness that $h(x,y)<0$ in ${\rm int}(H)$.  Let $m_0<0$ denote the maximum value of $h$ over the closed half-circle $S$ contained in $\parc H$.  Then, $\parc \Sigma_H$ lies below the closed half-space in $\R^3$ determined by the plane $\Pi$ of equation $z=\frac{-m_0}{r_0}y$. See Figure \ref{fig:saddle}. Note that $\Pi$ contains the $x$-axis, and lies below the plane $z=0$ for negative values of $y$. Since $\Sigma_H$ is saddle, it follows from the convex hull property for saddle surfaces that $\Sigma_H$ must lie in the convex hull of its boundary values $\parc \Sigma_H$: in particular, $\Sigma_H$ must lie below $\Pi$. But this contradicts that $z=0$ is the tangent plane to $\Sigma_H$ at the origin. This completes the proof.
\end{proof}

In what follows we will use the notation $h_{\nu}:=\esiz \nabla h,\nu\esde$, where $\nu\in \R^2$. We recall that a \emph{line field} on a surface $\Sigma$ continuously assigns to each $p\in \Sigma$ a line $L_p\subset T_p\Sigma$ passing through the origin. In particular, a line field $L$ defined on a punctured neighborhood of a point $q\in \Sigma$ has a topological index at $q$, which is a half-integer that measures the total variation of $L$ along any small positively oriented Jordan curve in $\Sigma$ surrounding $q$. See Hopf \cite{Ho}.
\begin{theorem}\label{th:analytic}
Let $h(x,y)$ be a real analytic, nonlinear function defined on a neighborhood of $(0,0)$, that satisfies $h_{xx}h_{yy}-h_{xy}^2 \leq 0$. Then:
\begin{enumerate}
\item
There exists a direction $\nu=(\cos \theta,\sin \theta)\in \S^1$ and a (possibly empty) finite collection of line segments $\Gamma_j$ passing through the origin such that $\nabla h_{\nu} \neq (0,0)$ on $\Omega^*\setminus \cup_{j} \Gamma_j$, and $D^2 h$ vanishes identically along each $\Gamma_j$. Here, $\Omega$ is a sufficiently small disk centered at the origin, and $\Omega^*:=\Omega\setminus \{(0,0)\}$
\item
The line field generated by $\nabla h_{\nu}$ in $\Omega^*\setminus \cup_j \Gamma_j$ extends to a real analytic line field $\cF$ defined in $\Omega^*$, that is orthogonal to each segment $\Gamma_j$.
 \item
The topological index of the line field $\cF$ at $(0,0)$ is non-positive.
\end{enumerate}
\end{theorem}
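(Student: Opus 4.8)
The plan is to work locally and to reduce everything to the index of a planar gradient field. Writing $\nu=(\cos\theta,\sin\theta)$, the key observation is that $\nabla h_\nu$ is itself a gradient, namely $\nabla h_\nu=\nabla(h_\nu)$ for the real analytic scalar function $h_\nu=\langle\nabla h,\nu\rangle$, and that $\nabla h_\nu=D^2h\cdot\nu$; hence $\nabla h_\nu$ vanishes at a point exactly when $\nu$ lies in the kernel of $D^2h$ there. For part (1) I would choose $\nu$ generically, avoiding the finitely many directions that occur as $\ker D^2h$ along the (finitely many) branches at the origin of the locus $\{\det D^2h=0\}$; with such a $\nu$ the only zeros of $\nabla h_\nu$ near the origin are the points of $\{D^2h=0\}$. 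Since $h$ is real analytic, this set is a finite union of analytic arcs through the origin, and Lemma \ref{lem:recta} forces each arc to be a line segment $\Gamma_j$. For part (2), taking $\Gamma_j=\{y=0\}$ in adapted coordinates, the equality of the mixed third derivatives of $h$ together with the saddle inequality shows that $h_{xx}$ and $h_{xy}$ vanish to strictly higher order in $y$ than $h_{yy}$ along $\Gamma_j$; factoring out the common power of $y$ then exhibits $[\nabla h_\nu]$ as an analytic line field whose direction on $\Gamma_j$ is vertical, i.e. orthogonal to $\Gamma_j$.

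For part (3) I would treat first the generic situation in which no segments $\Gamma_j$ occur, so that the origin is an isolated zero of the gradient field $\nabla h_\nu$ and $\cF=[\nabla h_\nu]$ on all of $\Omega^*$. Here I invoke the classical formula for the index of the gradient of a real analytic function at an isolated critical point: the index equals $1-\tfrac{b}{2}$, where $b$ is the (even) number of local half-branches of the level set $\{h_\nu=h_\nu(0)\}$ issuing from the origin. This index is $\le 0$ precisely when $b\ge 2$, i.e. precisely when the origin is \emph{not} a local extremum of $h_\nu$. Thus, in this case, part (3) reduces to showing that the saddle condition forbids $h_\nu$ from having a local maximum or minimum at the origin.

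The heart of the argument is this exclusion. Since the generic choice of $\nu$ forces $D^2h$ to vanish at the singularity, the lowest order term of $h$ is a homogeneous saddle polynomial $P_m$ with $m\ge 3$, and the leading term of $h_\nu-h_\nu(0)$ is $\partial_\nu P_m$, homogeneous of degree $m-1$. A local extremum would force $\partial_\nu P_m$ to be semidefinite. If $\partial_\nu P_m\ge 0$ vanishes along a ray $\R v_0$, then $v_0$ is a minimum of $\partial_\nu P_m$ on the circle, so its angular derivative vanishes there, while Euler's identity gives $\langle\nabla(\partial_\nu P_m)(v_0),v_0\rangle=(m-1)\partial_\nu P_m(v_0)=0$; these two orthogonal conditions yield $\nabla(\partial_\nu P_m)(v_0)=D^2P_m(v_0)\,\nu=0$, i.e. $\nu\in\ker D^2P_m(v_0)$ — a direction excluded by the genericity in part (1). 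If instead $\partial_\nu P_m$ is strictly definite, then the gradient map $\nabla P_m$ is sense-reversing ($\det D^2P_m\le 0$) and its image lies in a closed half-plane, hence it has topological degree $0$ at the origin; for a sense-reversing map this is possible only if $\det D^2P_m\equiv 0$, the totally degenerate case, which does not arise here. Either way $h_\nu$ has no local extremum, so $b\ge 2$ and the index is non-positive.

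The main obstacle I anticipate is the presence of the flat segments $\Gamma_j$, along which $\nabla h_\nu$ has non-isolated zeros, so that the Bendixson-type count above does not apply verbatim. Here one must argue directly with the extended analytic line field $\cF$ of part (2): its index is the total turning of its continuous direction along a small circle, and one has to verify that crossing each $\Gamma_j$ — where $\cF$ becomes orthogonal to the segment — contributes non-positively to this turning. Equivalently, one may reformulate the index in complex terms as the winding number of $w:=2\,\partial_{\bar z}^2h=\tfrac12(h_{xx}-h_{yy})+ih_{xy}$ about the origin: the saddle inequality $|\tfrac12\Delta h|\le|w|$ shows, by a straight-line homotopy, that $\nabla h_\nu$ and $w$ have the same winding, and this winding is defined uniformly whether or not the $\Gamma_j$ are present. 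The sector analysis must then be transported into a statement about this winding, and reconciling the two descriptions across the degenerate locus $\{\det D^2h=0\}$ is, I expect, the most delicate point of the proof.
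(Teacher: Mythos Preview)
Your treatment of parts (1) and (2) matches the paper. For part (3) in the isolated case you correctly reduce (as does the paper) to showing that $h_\nu$ has no strict local extremum, but your exclusion argument via the leading homogeneous term $P_m$ has a genuine gap. In your Case 1 (where $\partial_\nu P_m\ge 0$ vanishes at some $v_0\ne 0$) you derive $\nu\in\ker D^2P_m(v_0)$ and assert this contradicts the genericity of $\nu$ from part (1). It does not: the directions excluded there are the limiting kernels of $D^2h$ along the branches of $\{\det D^2h=0\}$, and nothing ties $\ker D^2P_m(v_0)$ to that finite list. In particular, when $\det D^2P_m\equiv 0$ (e.g.\ $P_m=cx^m$, which does \emph{not} force $\det D^2h\equiv 0$), the form $\partial_\nu P_m$ vanishes on a ray for every admissible $\nu$, and no finite exclusion helps. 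The paper instead argues directly with $h$: if $h_\nu>0$ on $\Omega^*$ then $\sigma=\nabla h$ sends $\Omega$ into a closed half-plane with $\sigma(\partial\Omega)$ in the open half-plane and $\det J\sigma\le 0$, whence the Hartman--Nirenberg relation $\partial(\sigma(\Omega))\subset\sigma(\partial\Omega)$ contradicts $(0,0)\in\partial(\sigma(\Omega))$. This is exactly the degree idea behind your Case 2, but applied to $h$ rather than to $P_m$, which is what makes it decisive. The case $\det D^2h\equiv 0$ is handled separately via the classification of flat graphs.

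For the case with segments $\Gamma_j$, which you flag as delicate but do not resolve, the paper does not use your complex function $w$ here (that device, essentially the field $Z=(-2h_{xy},h_{xx}-h_{yy})$, appears in Section~5 to relate $\nabla h_\nu$ to the principal line fields, not to compute the index of $\cF$). Instead the paper builds an explicit Jordan curve from a level arc $h_\nu^{-1}(\varepsilon_j)$ in each of the $2r$ nodal sectors of $h_\nu$, joined by short arcs of $\partial\Omega$, and shows that the variation of $\cF$ along each level arc tends to $\Theta_j-\pi\le 0$ (with $\Theta_j$ the sector opening angle) as $\varepsilon_j,r_0\to 0$. Since $\sum_j\Theta_j=2\pi$, the total variation is bounded above by $2(1-r)\pi+o(1)<\pi$, forcing the half-integer index to be $\le 0$. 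This sector-by-sector estimate is the substantive ingredient your sketch is missing.
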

\begin{remark}\label{rem:nu}
The proof will actually show that the statement of Theorem \ref{th:analytic} is true for any $\nu\in \S^1$ except for a finite number of directions $\nu_1,\dots, \nu_s\in \S^1$. The radius of the related disk $\Omega=\Omega(\nu)$ depends on the choice of $\nu\in \S^1\setminus \{\nu_1,\dots, \nu_s\}$. 
\end{remark}
\begin{proof}
Along the proof we will assume that $\Omega$ is a sufficiently small disk around the origin, of radius $r_0>0$. Also without loss of generality we will assume that $h(0,0)=0$ and $\nabla h (0,0)=(0,0)$, changing $h$ by $h -ax-by-c$ as in Lemma \ref{lem:recta} if necessary.

To start, we consider an arbitrary direction $\nu\in \S^1$. Then, $\nabla h_{\nu}$ defines a real analytic line field $\cL_{\nu}$ in $\Omega\setminus \cZ_{\nu}$, where $\cZ_{\nu}$ is the set of critical points of $h_{\nu}$ in $\Omega$. Taking a smaller $\Omega$ if necessary, we have by the analytic implicit function theorem that either $\cZ_{\nu}=\{(0,0)\}$, or $\cZ_{\nu}=\Omega$, or $\cZ_{\nu}$ is the union of a finite set of regular, embedded analytic curves that pass through $(0,0)$ and intersect only at the origin. Obviously, any  \emph{umbilical segment} $\Gamma$ along which the Hessian matrix of $h$ vanishes (see Lemma \ref{lem:recta}) is contained in $\cZ_{\nu}$. In these conditions, we have:
\begin{lemma}\label{lem:ext}
Let $\Gamma$ be an umbilical segment of $h$. Then, for every direction $\nu\in \S^1$ transverse to $\Gamma$, it holds $\cZ_{\nu}\neq \Omega$, and the line field $\cL_{\nu}$ generated by $\nabla h_{\nu}$ extends analytically across $\Gamma\setminus \{(0,0)\}$, so that $\cL_{\nu}(p)$ is orthogonal to $\Gamma$ at each $p\in \Gamma\setminus \{(0,0)\}$.
\end{lemma}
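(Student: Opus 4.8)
The plan is to work in coordinates adapted to $\Gamma$ and reduce the claim to a single divisibility computation for $\nabla h_\nu=(D^2h)\,\nu$. First I would rotate the $(x,y)$-coordinates so that the segment $\Gamma$ lies on the $x$-axis. Since $D^2h\equiv 0$ along $\Gamma$, the gradient $\nabla h$ is constant along $\Gamma$; subtracting an affine function (which changes neither $D^2 h$ nor $\nabla h_\nu$) I may assume $h=h_y=h_{yy}=0$ on $\Gamma$, i.e. $\partial_y^j h(x,0)\equiv 0$ for $j=0,1,2$. The whole statement is then local and rests on the identity $\nabla h_\nu=(D^2h)\,\nu=(h_{xx}\nu_1+h_{xy}\nu_2,\;h_{xy}\nu_1+h_{yy}\nu_2)$, together with the observation that transversality of $\nu$ to $\Gamma$ means exactly $\nu_2\neq 0$.

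Next I would bring in the order of vanishing of $h$ along $\Gamma$. Since $h$ is analytic and nonlinear it cannot vanish to infinite order along the whole $x$-axis (otherwise $h\equiv 0$ on $\Omega$), so there is a smallest integer $k$ with $c_k(x):=\partial_y^k h(x,0)\not\equiv 0$; by the normalization $k\geq 3$. Then $h=y^k H(x,y)$ with $H$ analytic and $H(x,0)=c_k(x)/k!$. Differentiating, $h_{xx}$ is divisible by $y^k$, $h_{xy}$ by $y^{k-1}$ and $h_{yy}$ by $y^{k-2}$, so both components of $\nabla h_\nu$ are divisible by $y^{k-2}$, and $W:=y^{-(k-2)}\nabla h_\nu$ is a real analytic vector field on $\Omega$ with $W(x,0)=\big(0,\;\tfrac{c_k(x)}{(k-2)!}\,\nu_2\big)$. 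Because $\nu_2\neq 0$ and $c_k\not\equiv 0$, this vector is nonzero and points in the $y$-direction, i.e. orthogonal to $\Gamma$, away from the zeros of $c_k$. In particular $\nabla h_\nu\not\equiv 0$, whence $\cZ_\nu\neq\Omega$; and since $\cL_\nu$ is spanned by $W$ (which generates the same line field as $\nabla h_\nu$ off $\{y=0\}$), it extends analytically across the $x$-axis, orthogonally to $\Gamma$, at every point where $W(x,0)\neq 0$.

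The only genuine obstacle is the set of points of $\Gamma$ where $c_k$ vanishes: there $W(x,0)=(0,0)$ and the extension really does break down. This is not vacuous — the model $h=(x^2-a^2)\,y^3$ has $c_3(x)=6(x^2-a^2)$, vanishing at $x=\pm a$, and near such a point $W$ has a linear part of negative determinant, so $\cL_\nu$ picks up an honest saddle-type (index $-1/2$) singularity instead of extending. The resolution is analyticity: $c_k$ is a nonzero real analytic function of one variable, so its zeros are isolated. Shrinking the disk $\Omega$ (whose radius $r_0$ is taken as small as needed throughout the proof, cf. Remark~\ref{rem:nu}) I can therefore arrange that the only possible zero of $c_k$ on $\Gamma\cap\Omega$ is the origin, which is excluded from the statement. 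On $(\Gamma\cap\Omega)\setminus\{(0,0)\}$ we then have $W(x,0)\neq 0$, which completes the analytic extension and the orthogonality, and records $\cZ_\nu\neq\Omega$. I would finally note that saddleness of $h$ is not itself used here once $\Gamma$ is known to be a segment; it enters only earlier, through Lemma~\ref{lem:recta}, to guarantee that $\Gamma$ is a line.
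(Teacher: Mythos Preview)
Your proof is correct and follows essentially the same approach as the paper's: factor $h$ as a power of the variable transverse to $\Gamma$ times an analytic remainder, then read off the leading term of $\nabla h_\nu$ to see that the associated line field extends across $\Gamma\setminus\{(0,0)\}$ orthogonally. The only cosmetic difference is that you place $\Gamma$ on the $x$-axis (writing $h=y^kH$) while the paper places it on the $y$-axis (writing $h=x^n w$); your explicit discussion of the isolated zeros of $c_k$ and the need to shrink $\Omega$ makes precise what the paper records with the phrase ``$w(0,y)\neq 0$ for $y\neq 0$ small enough.''
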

\begin{proof}
We may assume without loss of generality that $\Gamma$ is a segment of the line $x=0$. Since $\Gamma$ is umbilical and $h$ is not identically zero, we can then write 
\begin{equation}\label{humbilic}
h(x,y)= x^n w(x,y),
\end{equation}
where $n>2$ and $w(x,y)$ is real analytic, with $w(0,y)\neq 0$ for $y\neq 0$ small enough. If we write $\nu=(\cos \theta, \sin \theta)$, a calculation from \eqref{humbilic} shows that 
\begin{equation}\label{fraca0}
\nabla h_{\nu} (x,y)= x^{n-2} \{ \left(n(n-1) \cos \theta w(x,y), 0\right) + x ( \cdots) \}.
\end{equation}
In particular, since $\cos\theta\neq 0$, $\nabla h_{\nu}$ does not vanish identically, i.e. $\cZ_{\nu}\neq \Omega$. Thus, it follows from \eqref{fraca0} that for $x\neq 0$ and  $y\neq 0$ small enough so that $w(0,y)\neq 0$, we have 
\begin{equation}\label{fraca}
\frac{\nabla h_{\nu} (x,y)}{|\nabla h_{\nu} (x,y)|} \to (\pm 1,0)  \text{ as $(x,y)\to (0,y)$ }
\end{equation}
with either $x>0$ or $x<0$. 
Here, the sign $\pm$ only depends on the signs of $\cos \theta$, of $w(0,y)$ and of $x^{n-2}$. In particular, if $n$ is odd, we have a sign for $x>0$ and another one for $x<0$. In any case, \eqref{fraca} shows that the line field $\cL_{\nu}$ extends analytically across $\Gamma\setminus \{(0,0)\}$, while being orthogonal to $\Gamma$ at any $p\in \Gamma\setminus \{(0,0)\}$. This proves Lemma \ref{lem:ext}.
\end{proof}

\begin{lemma}\label{lem:flat}
Theorem \ref{th:analytic} holds if $h_{xx}h_{yy}-h_{xy}^2\equiv 0$.
\end{lemma}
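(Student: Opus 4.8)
The plan is to exploit the rigidity of the \emph{flat} condition $\det D^2h\equiv 0$. First I would fix the normalization $h(0,0)=0$, $\nabla h(0,0)=(0,0)$, and record the identity $\nabla h_{\nu}=(D^2h)\,\nu$, so that the line field generated by $\nabla h_{\nu}$ is, wherever $D^2h\neq 0$, precisely the image line of the symmetric matrix $D^2h$ --- \emph{independently of $\nu$} (as long as $\nu$ is not in its kernel). Since $h$ is nonlinear, analytic and satisfies $h_{xx}h_{yy}-h_{xy}^2\equiv 0$, the Hessian has rank exactly $1$ off the analytic set $\cU:=\{D^2h=0\}$. Shrinking $\Omega$ and applying Lemma~\ref{lem:recta} to each analytic arc of $\cU$, this set is a finite union of umbilical line segments $\Gamma_j$ through the origin. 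On $\Omega^*\setminus\bigcup_j\Gamma_j$ the kernel of $D^2h$ is a real analytic line field whose integral curves are the straight \emph{rulings} of the developable graph of $h$ (this is the classical Hartman--Nirenberg structure of flat graphs), and $\nabla h$ is constant along each ruling; the image line field $\cF$ is everywhere orthogonal to these rulings.

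Parts (1) and (2) then follow quickly. Writing $G:=\nabla h$, its Jacobian $D^2h$ is everywhere singular, so $G$ maps $\Omega$ into an analytic curve $C$ through the origin; in particular each $h_{\nu}=\langle \nabla h,\nu\rangle$ is constant along the rulings, so its level sets are exactly the rulings. I would then choose the direction $\nu$ so as to avoid the finitely many directions orthogonal to a tangent of $C$ occurring in $\Omega$ --- this is the content of Remark~\ref{rem:nu} and is why $\Omega=\Omega(\nu)$; for such $\nu$ we get $\nabla h_{\nu}=(D^2h)\nu\neq (0,0)$ off $\bigcup_j\Gamma_j$, which is (1). By Lemma~\ref{lem:ext}, the line field $\cL_{\nu}$ extends analytically across each $\Gamma_j\setminus\{(0,0)\}$ orthogonally to $\Gamma_j$, producing the real analytic line field $\cF$ on $\Omega^*$ orthogonal to the $\Gamma_j$, which is (2). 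Since $\cF$ is orthogonal to the rulings, its topological index at the origin equals the index of the ruling line field, and it is this that I must now bound.

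For (3) I would compute the index of the straight-line ruling foliation. The key geometric input is that the rulings are \emph{straight segments} foliating $\Omega^*$, so distinct rulings are disjoint inside $\Omega$. Using the involution $\iota$ of a small circle about the origin that pairs the two points in which each crossing ruling meets it, the non-crossing property of straight chords forces $\iota$ to reverse orientation, whence the ruling direction cannot wind positively; equivalently, the only straight-line foliation of a punctured disk with positive index at the center is the full pencil of lines through the center, of index $+1$. This pencil is impossible here: if every ruling passed through the origin, then $\nabla h$, being constant along each of them and vanishing at the origin, would be identically $0$, contradicting that $h$ is nonlinear. Hence the index of $\cF$ at the origin is $\leq 0$, which proves (3) and the lemma.

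The main obstacle is precisely this index computation when the origin is a \emph{degenerate} umbilic, i.e.\ when the gradient curve $C$ is singular at the origin (a cusp, or several branches), so that more than one ruling passes through the origin and the pairing $\iota$ degenerates. In the generic case, with $C$ regular at the origin, the rulings meeting a small circle are nearly parallel and the index is plainly $0$; the delicate point is to show that the presence of several origin-rulings, together with the analytic extension of $\cF$ across the odd-order umbilical segments $\Gamma_j$, still cannot create positive winding. I would handle this by shrinking $\Omega$ so that the origin is the only singularity of the foliation, and then reducing to the non-crossing dichotomy above, where the straightness of the leaves is exactly what rules out a positive index.
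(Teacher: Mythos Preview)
Your setup is correct and the geometric picture you paint---that in the flat case $\cL_\nu$ is the image line of $D^2h$, orthogonal to the straight rulings of the developable graph---is exactly the right way to think about it. But the proof is incomplete at the point you yourself flag as the ``main obstacle'': you never actually carry out the index bound when the ruling foliation is singular at the origin. Saying you would ``shrink $\Omega$ so that the origin is the only singularity'' and then invoke a ``non-crossing dichotomy'' is a plan, not an argument; the involution/pairing idea is suggestive but you do not show it gives index $\leq 0$ in the degenerate situation. As written, part~(3) is not proved.

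The paper's proof avoids this difficulty entirely by extracting more structure from the flat hypothesis. It looks at the lowest-order homogeneous term $\omega$ of $h$; since $\omega$ is a homogeneous solution of $\omega_{xx}\omega_{yy}-\omega_{xy}^2=0$, Pogorelov/Hartman--Nirenberg force $\omega(x,y)=a(\alpha x+\beta y)^n$. After a rotation one has $h=\hat a\,x^n+\cdots$ with $\hat a\neq 0$. Combined with the classical fact that on a flat analytic graph each ruling is either entirely umbilical or entirely non-umbilical, this shows there is a \emph{single} umbilical segment $\Gamma=\{x=0\}$ through the origin, and in the factorization $h=x^n w(x,y)$ one has $w(0,0)=\hat a\neq 0$. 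The computation of Lemma~\ref{lem:ext} then applies not only on $\Gamma\setminus\{0\}$ but at the origin as well, so $\cL_\nu$ extends analytically across $(0,0)$ and the index is exactly~$0$.

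In particular, the case you worry about---several umbilical segments, or several rulings, meeting at the origin---simply does not occur. That uniqueness is the structural fact you are missing; once you have it, your ruling-foliation argument collapses to the trivial case (rulings nearly parallel to $\Gamma$, foliation extends across the origin), and you recover the paper's conclusion with index equal to~$0$ rather than merely $\leq 0$.
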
 
\begin{proof}
Let $\omega(x,y)$ be the homogeneous polynomial of lowest degree $n\geq 2$ in the series expansion of $h(x,y)$ at the origin. It is then clear that $\omega_{xx} \omega_{yy}-\omega_{xy}^2=0$. So, $z=\omega(x,y)$ is a complete flat graph in $\R^3$. By the well known classification of such graphs due to Pogorelov \cite{Pog}, see also \cite{HN}, we deduce that $\omega(x,y)$ depends only on one variable, i.e., $\omega(x,y)=f(\alfa x + \beta y)$, for adequate constants $\alfa,\beta$ and a certain function $f(v)$. Since $\omega$ is a homogeneous polynomial, we have $\omega(x,y)=a(\alfa x+ \beta y)^n$ for some $a\neq 0$ and $n\geq 2$.

If $n=2$, then $D^2 h$ is different from zero at the origin, and in that case the result is immediate since either $\nabla h_{x}$ or $\nabla h_y$ is non-zero at the origin. The topological index of $\cF$ at $(0,0)$ in this case is trivially zero.

Assume next $n>2$. After a rotation in the $(x,y)$ coordinates, we can then write 
\begin{equation}\label{hflat}
h(x,y)=\hat{a} x^n + \cdots, \hspace{0.5cm} \hat{a}\neq 0.
\end{equation}
The graph $\Sigma$ given by $z=h(x,y)$ is flat and real analytic in $\R^3$. It is then classically known that $\Sigma$ is foliated by straight lines, which are principal directions for the null principal curvature $\kappa=0$ of $\Sigma$. It is also classically known that if $\gamma\subset \Sigma$ is any such straight line, then either all points of $\gamma$ are umbilical (i.e., the second fundamental form of $\Sigma$ vanishes along $\gamma$), or $\gamma$ has no umbilical points. See Section 5.8 in \cite{doc}.

In the language of Theorem \ref{th:analytic}, and since we are assuming that $n>2$ and so $D^2 h$ vanishes at $(0,0)$, we deduce that there exists a unique line segment $\Gamma$ passing through the origin such that $D^2 h$ vanishes identically along $\Gamma$, and $D^2h$ has rank one in $\Omega\setminus \Gamma$. By \eqref{hflat}, $\Gamma$ is contained in the $x=0$ axis. In particular, \eqref{humbilic} holds, and \eqref{hflat} implies that $w(0,0)\neq 0$ in \eqref{humbilic}. Thus, the argument in Lemma \ref{lem:ext} shows that for any $\nu\in \S^1$ that is transverse to $\Gamma$, the line field $\cL_{\nu}$ extends analytically across the origin, and in particular has index zero at $(0,0)$. This proves Lemma \ref{lem:flat}.
\end{proof}

\emph{In the rest of the proof we will assume that, by Lemma \ref{lem:flat},} $h_{xx}h_{yy}-h_{xy}^2\not\equiv 0.$%

\begin{lemma}\label{lem:ext2}
There exists a direction $\nu\in \S^1$ such that the line field $\cL_{\nu}$ extends analytically to $\Omega^*$. In particular, it has a well defined topological index at the origin.
\end{lemma}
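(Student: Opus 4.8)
The plan is to reformulate the critical set $\cZ_\nu$ in terms of the Hessian and then to choose $\nu$ so that $\cZ_\nu$ reduces, on a small punctured disk, to the umbilical segments already handled by Lemma \ref{lem:ext}. The starting point is the identity $\nabla h_\nu = D^2 h\cdot \nu$ (reading $\nu$ as a column vector), which shows that $p\in\cZ_\nu$ exactly when $\nu\in\ker D^2h(p)$. Because the saddle condition forces $\det D^2 h\le 0$, at each point the symmetric matrix $D^2 h$ is invertible (where $\det D^2 h<0$), of rank one, or zero. Hence $\cZ_\nu$ splits as the umbilical set $\cU:=\{D^2h=0\}$, which does not depend on $\nu$ and whose one–dimensional part is a finite union of umbilical segments by Lemma \ref{lem:recta}, together with the rank–one points whose kernel direction equals $\pm\nu$. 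I want to kill this second contribution by a generic choice of $\nu$.

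First I would record the local structure of $\cC:=\{\det D^2 h=0\}$. Under the standing hypothesis $\det D^2h\not\equiv 0$, the set $\cC$ is a real analytic curve, so after shrinking $\Omega$ it is either $\{(0,0)\}$ or a finite union of analytic arcs $\gamma_1,\dots,\gamma_m$ meeting only at the origin. On the rank–one locus $\cC\setminus\cU$ the kernel line of the symmetric matrix $D^2h$ depends real-analytically on the point, so along each arc $\gamma_i$, away from its isolated umbilical points, I obtain an analytic kernel–direction map $\nu_0\colon\gamma_i\to\mathbb{RP}^1$.

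The decisive step is an arc–by–arc dichotomy obtained from the identity principle applied to $\nu_0$ along $\gamma_i$. If $\nu_0$ is constant on $\gamma_i$, equal to some $\pm\nu^{(i)}$, then $\gamma_i\subset\cZ_{\nu^{(i)}}$ and $\nu=\pm\nu^{(i)}$ is a forbidden direction; there are at most $m$ of these. If $\nu_0$ is non-constant on $\gamma_i$, then for every fixed $\nu$ the coincidence set $\{\nu_0=\pm\nu\}$ is isolated on $\gamma_i$ and cannot accumulate at the origin, so after shrinking $\Omega=\Omega(\nu)$ the arc $\gamma_i$ makes no contribution to $\cZ_\nu$ on $\Omega^*$. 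Discarding in addition the finitely many directions parallel to some umbilical segment, I am left with a cofinite set of admissible $\nu\in\S^1$ for which $\cZ_\nu\cap\Omega^*$ consists only of umbilical segments, each transverse to $\nu$. For such $\nu$ the field $\cL_\nu$ is analytic off $\cZ_\nu$ and, by Lemma \ref{lem:ext}, extends analytically across every umbilical segment minus the origin; therefore it extends to all of $\Omega^*$. This simultaneously yields the finiteness claim of Remark \ref{rem:nu}.

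I expect the main obstacle to be the behaviour of the kernel–direction map near the origin: one must justify that each arc of $\cC$ carries a genuine analytic kernel direction away from the origin, and, crucially, that its coincidences with a fixed $\nu$ do not accumulate at the origin unless $\nu_0$ is constant along that arc. The ingredients---finiteness of the arc decomposition of $\cC$, analyticity of the kernel of a rank-one analytic symmetric matrix, and the identity principle forbidding accumulation---are individually standard, but combining them to exhibit one direction $\nu$ that is at once transverse to all umbilical segments and avoids all the constant kernel directions is the heart of the argument.
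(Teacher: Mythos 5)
Your proposal is correct and follows essentially the same route as the paper: both identify the bad directions as the (finitely many) constant kernel directions of $D^2h$ along the non-umbilical components of $\{\det D^2h=0\}$ (equivalently, the $\nu$ with $\nabla h_\nu\equiv 0$ along such a component), discard them together with the directions parallel to umbilical segments, and then invoke Lemma \ref{lem:ext} to extend $\cL_\nu$ across the remaining umbilical segments. The only cosmetic difference is that you verify the non-accumulation of coincidence points arc by arc via the kernel-direction map, whereas the paper gets the same conclusion from the structure of the critical set of $h_\nu$ as an analytic set after shrinking $\Omega=\Omega(\nu)$.
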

\begin{proof}
Let $\cK\subset \Omega$ be the set of points where $h_{xx}h_{yy}-h_{xy}^2=0$. Since $h_{xx}h_{yy}-h_{xy}^2\not\equiv 0$, we have by the analytic implicit function theorem that either $\cK\cap \Omega^*$ is empty, or $\cK$ is the union of a finite number of regular, embedded, real analytic curves that intersect only at the origin. Note that the umbilical segments $\Gamma_j$, in case they exist, are trivially among such curves of $\cK$. See Figure \ref{fig:queso1}. Making $\Omega$ smaller, we can assume that $D^2 h$ has rank one (except maybe at the origin) along any curve $\beta_j$ of $\cK$ that is not an umbilical segment. 

\begin{figure}[htbp]
      \includegraphics[width=4.7cm]{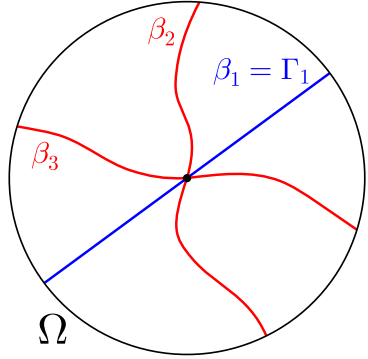} 
     \caption{The curves $\beta_j$, including the umbilical segments $\Gamma_j$, of $\cK$.} 
\label{fig:queso1}
\end{figure}

Let $\nu\in \S^1$ be an arbitrary direction. By analyticity, and making $\Omega$ smaller if necessary, we have by ${\rm det}(D^2 h)\not\equiv 0$ that the set of critical points of $h_{\nu}$ is either empty, or the origin, or a finite union of regular, real analytic curves passing through $(0,0)$. Moreover, along any curve over which $\nabla h_{\nu}\equiv (0,0)$ holds, one has ${\rm det}(D^2 h)=0$, i.e., the curve is one of the curves $\beta_j$ that compose $\cK$. On the other hand, if $\beta_j$ is not an umbilic segment $\Gamma_j$, then for any direction $\nu^0\in \S^1$ linearly independent from $\nu$, it holds $\nabla h_{\nu^0}(p)\neq (0,0)$ at any $p\in \beta_j\cap \Omega^*$, since $D^2 h$ has rank one at every point of $\beta_j\cap \Omega^*$.

In particular, there is at most a finite number of directions $\nu=\nu_j\in \S^1$ for which $\nabla h_{\nu}\equiv (0,0)$ holds along some non-umbilical curve $\beta_j\in \cK$. Moreover, by the above discussion, any $\nu\in \S^1$ linearly independent to these directions $\nu_j$ satisfies that $\nabla h_{\nu} \neq (0,0)$ in $\Omega^*\setminus \cup_j \Gamma_j$. Choose now such $\nu$ so that, additionally, is transverse to all the umbilical segments $\Gamma_j$. Then, the line field $\cL_{\nu}$ generated by $\nabla h_{\nu}$, which is at first only defined in $\Omega^*\setminus \cup_j \Gamma_j$, extends analytically to $\Omega^*$, by Lemma \ref{lem:ext}, and is orthogonal to each $\Gamma_j$. This proves Lemma \ref{lem:ext2}.
\end{proof}

Observe that, once here, we have already proved the first two assertions of Theorem \ref{th:analytic}. So, to finish, we now need to show that the topological index of the line field $\cF:= \cL_{\nu}$ is $\leq 0$ at the origin. We will do it in two steps:  (a) when there are no umbilical segments $\Gamma_j$, i.e., if $D^2 h$ is never zero in a punctured neighborhood $\Omega^*$ of the origin, and (b) if there exist umbilical segments $\Gamma_j$.

We start with the first case.

\begin{lemma}\label{lem:iso}
Assume that $D^2 h$ does not vanish at any point of the punctured disk $\Omega^*$. Then, the topological index of $\cF$ at $(0,0)$ is $\leq 0$.
\end{lemma}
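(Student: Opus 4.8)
The plan is to identify the index of $\cF$ with the Poincar\'e index of a genuine vector field, and then to force that index to be non-positive using the saddle hypothesis. First I would record the elementary identity $\nabla h_\nu=(D^2h)\,\nu$, so that the line field $\cF=\cL_\nu$ is generated by the vector field $V:=(D^2h)\,\nu$. Since $D^2h\neq 0$ on $\Omega^*$ by hypothesis, there are no umbilical segments $\Gamma_j$ inside $\Omega^*$, and the choice of $\nu$ in Lemma \ref{lem:ext2} guarantees that $V=\nabla h_\nu$ never vanishes on $\Omega^*$. Hence $\cF$ is the line field underlying the globally defined, non-vanishing field $V$, and its topological index at the origin is an \emph{integer}, equal to the Poincar\'e index of $V$ at $0$. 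If $V(0)\neq 0$ this index is $0$ and there is nothing to prove, so from now on I would assume $V(0)=0$; equivalently, $g:=h_\nu$ has an isolated critical point at the origin.

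Next I would use that $V=\nabla g$ is the gradient of the real analytic function $g$. Gradient fields have no elliptic sectors (the function $g$ is strictly monotone along each trajectory, so no orbit can return to the singularity), so the Bendixson--Poincar\'e index formula reduces to $\mathrm{ind}(V,0)=1-\frac12 H$, where $H$ is the even number of hyperbolic sectors of $\nabla g$ at $0$. Consequently $\mathrm{ind}(V,0)\le 1$, with equality exactly when $g$ has a strict local extremum at $0$ (the case $H=0$), and $\mathrm{ind}(V,0)\le 0$ as soon as $H\ge 2$, i.e. as soon as $g$ changes sign near the origin. The lemma is therefore equivalent to the assertion that $g=h_\nu$ does not attain a local extremum at $0$.

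The saddle hypothesis must enter precisely here. Its cleanest form is that the gradient map $N:=\nabla h\colon\Omega\to\R^2$ has Jacobian $\det DN=\det D^2h\le 0$, i.e. $N$ is orientation reversing. Arguing by contradiction, suppose $h_\nu\ge 0$ near $0$ (the case $h_\nu\le 0$ is handled by replacing $\nu$ with $-\nu$). I would pass to the lowest-order homogeneous part $P$ of $h$ at the origin and compare the two resulting homogeneous conditions: $h_\nu\ge 0$ forces the corresponding homogeneous part of $\langle \nabla P,\nu\rangle$ to be a non-negative homogeneous polynomial, while $\det D^2h\le 0$ forces its leading part $\det D^2P\le 0$. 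In the degenerate flat case $\det D^2P\equiv 0$ one invokes Pogorelov's classification of complete flat graphs, exactly as in Lemma \ref{lem:flat}, to reduce $P$ to a power of a linear form. A parity and structure analysis of these conditions then contradicts $h_\nu\ge 0$, so $g$ changes sign and $H\ge 2$.

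I expect this last step to be the main obstacle, because transferring the pointwise inequality $\det D^2h\le 0$ to the leading jet is delicate when the leading part of $h$ is itself flat (the saddle information then lives in higher-order terms) or when $\nu$ happens to be tangent to the degeneracy locus of $D^2h$. As a reserve route that avoids the sector count, I would compute $\mathrm{ind}(V,0)$ as a winding number: writing $z=x+iy$, one has $V\ \leftrightarrow\ \frac12\Delta h+2\,\overline{h_{zz}}$, and the saddle condition is exactly $|h_{zz}|\ge\frac14|\Delta h|$. A Rouch\'e-type comparison on $\partial\Omega$ then shows that the winding number of $V$ equals that of $\overline{h_{zz}}$, so that $\mathrm{ind}(V,0)\le 0$ becomes the statement that $h_{zz}$ has non-negative winding around $0$; the crux is again to exploit that $h_{zz}$ is a second complex derivative of a real saddle function.
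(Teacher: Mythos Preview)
Your reduction is correct and in fact coincides with the paper's: both arguments boil down to showing that $g=h_\nu$ cannot have a strict local extremum at the origin (the paper cites Lemma~3.1 of \cite{AM} for the implication ``positive index $\Rightarrow$ strict extremum'', which is exactly your Bendixson--Poincar\'e step in disguise). The genuine gap is in how you dispatch this last claim. Your ``parity and structure analysis'' of the leading homogeneous part $P$ is not carried out, and you yourself flag the difficulty: when $\det D^2P\equiv 0$ the saddle information sits in higher-order terms and the comparison breaks down; and even when $\det D^2P\not\equiv 0$, the inequality $P_\nu\ge 0$ for a homogeneous polynomial of even degree does not by itself clash with $\det D^2P\le 0$ without a substantial further argument. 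Your Rouch\'e route has the same problem: the saddle condition only gives $|2\overline{h_{zz}}|\ge |\tfrac12\Delta h|$ with non-strict inequality (and possibly $h_{zz}=0$ on circles), and after the comparison you still need to show that $h_{zz}$ has non-negative winding, which is not automatic since $h_{zz}$ is not holomorphic.

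The paper closes this gap cleanly with a different idea. One looks at the full gradient map $\sigma=\nabla h=(h_x,h_y):\Omega\to\R^2$, whose Jacobian satisfies $\det J\sigma=\det D^2h\le 0$. If, say, $h_\nu>0$ on $\Omega^*$, then $\sigma(\Omega)$ lies in the closed half-plane $\{\langle\cdot,\nu\rangle\ge 0\}$ and touches its boundary only at $\sigma(0)=0$, while $\sigma(\partial\Omega)$ lies strictly inside the open half-plane. But the Hartman--Nirenberg theorem \cite{HN} for maps with sign-definite Jacobian forces $\partial(\sigma(\Omega))\subset\sigma(\partial\Omega)$, and $(0,0)\in\partial(\sigma(\Omega))\setminus\sigma(\partial\Omega)$ gives the contradiction. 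This global ``image of the boundary'' argument is what replaces your unfinished jet analysis; note that it uses the saddle hypothesis through the Jacobian of $\nabla h$, not through $D^2h$ acting on the single direction $\nu$.
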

\begin{proof}
Assume without loss of generality, after a rotation in the $(x,y)$-coordinates, that $\nu =(1,0)$, i.e., $h_{\nu}=h_x$. Arguing by contradiction, assume that $\cF$ has positive index at the origin. This means that the vector field $\nabla h_x$ has positive index at the origin (note that $(0,0)$ is an isolated critical point of $h_x$ in the present case).

In that situation, it follows that $h_x$ has a strict local maximum or minimum at $(0,0)$ (see Lemma 3.1 in \cite{AM}), i.e. either $h_x>0$ or $h_x<0$ in $\Omega^*$, maybe choosing $\Omega$ smaller if necessary. For definiteness, let us assume that $h_x>0$ in $\Omega^*$. Then, the mapping $$\sigma =(h_x,h_y):\Omega\flecha \R^2$$ has its image contained in the \emph{extended} half-plane $\{(p,q): p>0\}\cup \{(0,0)\}$.

Since $h_{xx}h_{yy}-h_{xy}^2 \leq 0$, we have ${\rm det}(J\sigma) \leq 0$, where $J\sigma$ denotes the Jacobian matrix of $\sigma$. Thus, we are in the conditions of  Hartman-Nirenberg \cite{HN}, which yields that $$\parc (\sigma(\Omega)) \subseteq \sigma (\parc \Omega).$$ That is, the boundary of the image $\sigma(\Omega)$ is contained in the image of the boundary $\parc \Omega$.

In our situation, $\sigma (\parc\Omega)$ is contained in the open half-plane $\{(p,q): p>0\}$, and so it does not pass through $(0,0)$. Since $\sigma(0,0)=(0,0)$ and $\sigma(\Omega)\subset \{(p,q): p>0\}\cup \{(0,0)\}$, we have $(0,0)\in \parc (\sigma(\Omega))$. This contradicts the result by Hartman and Nirenberg, what proves Lemma \ref{lem:iso}.
\end{proof}

We finally study the case in which there exist umbilical segments $\Gamma_j$.

\begin{lemma}\label{lem:noniso}
Assume that $D^2 h$ does not vanish at any point in $\Omega^*\setminus \{\Gamma_1,\dots, \Gamma_k\}$, where each $\Gamma_j$ is a segment passing through the origin along which $D^2h$ vanishes. Then, the topological index of $\cF$ at $(0,0)$ is $\leq 0$.
\end{lemma}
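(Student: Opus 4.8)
The plan is to argue by contradiction, adapting the mechanism of Lemma \ref{lem:iso} so as to account for the umbilical segments $\Gamma_1,\dots,\Gamma_k$. As in that proof, after a rotation in the $(x,y)$-coordinates I take $\nu=(1,0)$, so that $\cF$ is the analytic extension of the line field spanned by $\nabla h_x=(h_{xx},h_{xy})$. By the choice of $\nu$ from Lemma \ref{lem:ext2}, the only zeros of $\nabla h_x$ in $\Omega^*$ lie on $\cup_j\Gamma_j$; along each $\Gamma_j$ the Hessian $D^2h$ vanishes, so in fact $\nabla h_x\equiv(0,0)$ there while $\cF$ extends orthogonally to $\Gamma_j$ by Lemma \ref{lem:ext}. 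Suppose, for contradiction, that $\cF$ has strictly positive index at the origin. Each $\Gamma_j$ is a segment through the origin and so issues one or two rays from $(0,0)$; I label all of these rays $R_1,\dots,R_m$ in cyclic order, with $R_i$ at angle $\alpha_i$, and let $S_i$ be the open circular sector between $R_i$ and $R_{i+1}$, of angular width $\theta_i=\alpha_{i+1}-\alpha_i$, so that $\sum_i\theta_i=2\pi$.

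Computing the index on a small circle $C=\parc\Omega$, I write $2\pi\,\mathrm{ind}(\cF)=\sum_i\Delta_i$, where $\Delta_i$ is the signed, continuously tracked rotation of the line $\cF$ along the arc $A_i=C\cap\ov{S_i}$. Since $\cF$ is orthogonal to the bounding ray at each endpoint of $A_i$, its line-angle changes from $\alpha_i+\tfrac\pi2$ to $\alpha_{i+1}+\tfrac\pi2$ modulo $\pi$, so $\Delta_i\equiv\theta_i\pmod\pi$; the whole content of the proof is to pin down the integer ambiguity in the direction forced by the saddle condition.

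The key step is the per-sector estimate $\Delta_i\le\theta_i-\pi$. To obtain it I work inside the closed sector $\ov{S_i}$ with the gradient map $\sigma=(h_x,h_y):\ov{S_i}\to\R^2$, whose Jacobian satisfies $\det(J\sigma)=h_{xx}h_{yy}-h_{xy}^2\le 0$. The crucial new feature produced by the umbilical rays is that $\sigma\equiv(0,0)$ on both $R_i$ and $R_{i+1}$, because $\nabla h\equiv(0,0)$ along any $\Gamma_j$; thus $\sigma$ collapses the two straight edges of the sector to the origin of the target plane, and $\sigma(\parc S_i)=\sigma(A_i)\cup\{(0,0)\}$. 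Applying the Hartman--Nirenberg property \cite{HN} to $\sigma$ on $\ov{S_i}$ (legitimate since $\det(J\sigma)\le 0$), together with the fact that $\nabla h_x$ has no interior zeros in $S_i$, I track the direction of the first column $\nabla h_x$ of $J\sigma$ along $A_i$. The orientation-reversing constraint $\det(J\sigma)\le 0$ prevents $\nabla h_x$ from winding forward relative to the boundary data imposed by orthogonality on the two collapsed edges, forcing at least a half-turn of backward rotation and yielding $\Delta_i\le\theta_i-\pi$. This is the sector analogue of Lemma \ref{lem:iso}: there the whole punctured disk was a single region with no collapsed edges, and positive index was ruled out directly through the half-plane image of $\sigma$.

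Summing the estimate over the $m\ge 2$ rays gives $2\pi\,\mathrm{ind}(\cF)=\sum_i\Delta_i\le\sum_i(\theta_i-\pi)=2\pi-m\pi\le 0$, so $\mathrm{ind}(\cF)\le 0$, contradicting the assumed positive index. The main obstacle is precisely the per-sector rotation estimate $\Delta_i\le\theta_i-\pi$, and two points demand care. First, $h_x$ need not be one-signed on $S_i$: its zero set may contain regular nodal arcs crossing the sector (this already occurs for $h=x^3y$), so one cannot simply reduce to the one-signed image argument of Lemma \ref{lem:iso} and must instead control the winding of $\nabla h_x$ through the degenerate map $\sigma$ with its two edges collapsed to a single point. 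Second, the bookkeeping of how each $\Gamma_j$ contributes one or two rays must be handled, and the degenerate case in which the $\Gamma_j$ produce a single ray ($m=1$, width $\theta_1=2\pi$) requires the sharpened bound $\Delta_1\le 0$, which the same orientation-reversing analysis provides; the explicit models $h=x^{n}y$, for which each sector attains $\Delta_i=\theta_i-2\pi$, confirm that these estimates are consistent and sharp.
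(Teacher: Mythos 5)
There is a genuine gap. Your overall architecture — decompose a punctured neighbourhood of the origin into sectors bounded by the umbilical rays, bound the rotation of $\cF$ on each sector, and sum using $\sum_i\theta_i=2\pi$ — is essentially the right skeleton, and it is close to what the paper does. But the entire content of the lemma is concentrated in the per-sector estimate $\Delta_i\le\theta_i-\pi$, and you do not prove it. The sentence asserting that ``the orientation-reversing constraint $\det(J\sigma)\le 0$ prevents $\nabla h_x$ from winding forward relative to the boundary data imposed by orthogonality on the two collapsed edges, forcing at least a half-turn of backward rotation'' is a restatement of the desired conclusion, not an argument. The Hartman--Nirenberg inclusion $\parc(\sigma(\ov{S_i}))\subseteq\sigma(\parc S_i)$ constrains the \emph{image} of $\sigma$; it says nothing directly about the winding of the first column of $J\sigma$ along the arc $A_i$, and in Lemma \ref{lem:iso} it was only usable because the Alessandrini--Magnanini lemma first converted ``positive index'' into ``$h_x$ one-signed,'' placing $\sigma(\Omega)$ in a half-plane. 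You correctly observe that $h_x$ need \emph{not} be one-signed on a sector $S_i$ (nodal arcs of $h_x$ with $\nabla h_x\neq(0,0)$ can cross it), which is precisely why that mechanism does not transfer — but you then leave the resulting difficulty unresolved.

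The paper closes exactly this gap by refining your decomposition: it cuts $\Omega^*$ not only along the umbilical segments $\Gamma_j$ but along the \emph{entire} nodal set $h_x^{-1}(0)$, i.e. also along the regular nodal curves $\gamma_1,\dots,\gamma_s$ on which $\nabla h_x\neq(0,0)$. On each resulting nodal subdomain $D_j$ the function $h_x$ has a fixed sign and $\nabla h_x$ never vanishes, so one may replace the circular arc by a connected level curve $h_x^{-1}(\ep_j)\cap D_j$, along which $\nabla h_x$ is the (inward or outward, according to the sign of $h_x$) normal; letting $\ep_j\to 0$ and the radius $r_0\to 0$, the rotation along that level arc converges to $\Theta_j-\pi\le 0$, where $\Theta_j$ is the opening angle of $D_j$ at the origin. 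Summing over the $2r$ subdomains and controlling the short circular arcs via the orthogonality of $\cF$ to the nodal curves gives total variation $<\pi/2$, hence index $\le 0$ since the index is a half-integer. If you want to keep your coarser sectors, you would still have to perform this nodal subdivision inside each $S_i$ (your own example $h=x^3y$ does not exhibit interior nodal arcs, so it does not test the hard case); as written, the proof is missing its central step. A minor additional point: each $\Gamma_j$ is a full segment through the origin, so $m\ge 2$ automatically and the degenerate case $m=1$ you worry about does not arise.
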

\begin{proof}
Assume, up to a rotation in the $(x,y)$-coordinates, that $\nu=(1,0)$ for the direction $\nu\in \S^1$ in Lemma \ref{lem:ext2}. Thus, $\nabla h_{\nu}=\nabla h_x$. Then, as shown in the proof of Lemma \ref{lem:ext2}, we have that $\nabla h_x \neq (0,0)$ in $\Omega^*\setminus \{\Gamma_1,\dots,\Gamma_k\}$. By analyticity, the nodal set $h_x^{-1}(0)$ of $h_x$ in $\Omega$ is the union of (see Figure \ref{fig:queso2})

\begin{enumerate}
\item
A finite set of regular, embedded analytic curves $\gamma_1,\dots, \gamma_s$ passing through the origin, and along which $\nabla h_x\neq (0,0)$ except at $(0,0)$ (note that $\nabla h_x$ is normal to any such curve), and
\item
The umbilical segments $\Gamma_1,\dots, \Gamma_k$.
\end{enumerate}

\begin{figure}[htbp]
      \includegraphics[width=5.1cm]{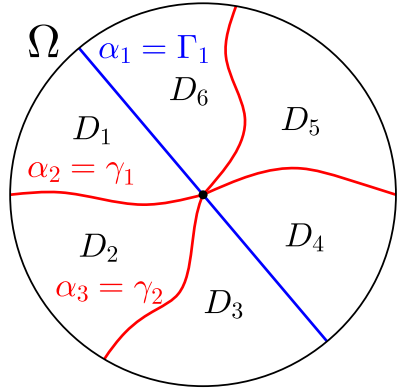} 
     \caption{The curves $\alfa_j$ that compose the nodal set $h_x^{-1}(0)$, and the associated nodal regions.} 
\label{fig:queso2}
\end{figure}

By Lemma \ref{lem:ext}, the line field $\cF$ generated by $\nabla h_x$ extends to $\Omega^*$, and $\cF(p)$ is orthogonal to $\Gamma_j$ at each $p\in \Gamma_j\cap \Omega^*$.

Let us denote by $\alfa_1,\dots, \alfa_r$ to the union of the curves $\gamma_j$ and $\Gamma_j$, see Figure \ref{fig:queso2}. Clearly, the curves $\alfa_j$ divide $\Omega^*$ into an even number of disjoint open \emph{sectors} $D_1,\dots, D_{2r}$. For each such sector $D_j$, we will let $s_{1,j},s_{2,j}$ denote the two endpoints of the circular arc $\parc\Omega\cap \parc D_j$, ordered with respect to the positive orientation of $\parc\Omega$.

Observe that $h_x$ has a sign on each $D_j$. Moreover, since $\nabla h_x\neq (0,0)$ in $\cup_j D_j$, each $D_j$ is foliated by regular, analytic level curves of $h_x$, with normal directions given by $\nabla h_x$. 

We will next define a Jordan curve $\Upsilon\subset \overline{\Omega}$ that encloses $(0,0)$, in order to compute later on the total variation of the line field $\cF$ along $\Upsilon$.

First, for each $D_j$, the intersection $\Upsilon\cap D_j$ is defined by choosing some non-empty level curve $h_{x}^{-1}(\ep_j)\cap D_j$. Clearly, we can take $\ep_j$ small enough so that the level curve $h_{x}^{-1}(\ep_j)\cap D_j$ is connected. We denote then by $t_{1,j},t_{2,j}$ the two intersection points of this curve with $\parc \Omega$, again ordered with respect to the positive orientation of $\parc \Omega$; see Figure \ref{fig:queso3}, left. Second, we add to these level curves the circular arcs in $\parc D_j\cap \parc \Omega$ that joint $s_{1,j}$ with $t_{1,j}$ and $s_{2,j}$ with $t_{2,j}$. The union of these elements define a piecewise analytic Jordan curve $\Upsilon$ that encloses the origin. See Figure \ref{fig:queso3}, right.

\begin{figure}[htbp]
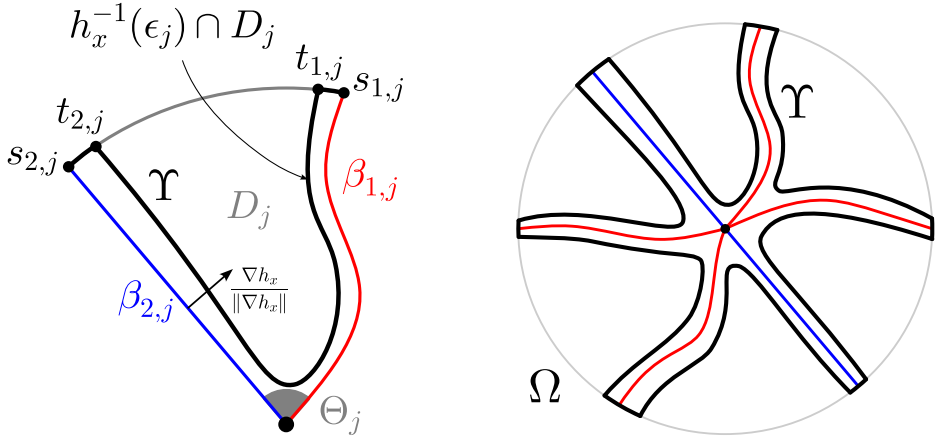
\vspace{0.3cm}
      \includegraphics[width=5.2cm]{figura4b.pdf} \hspace{1.3cm} \includegraphics[width=5.5cm]{figura6.pdf}
     \caption{The definition of the curve $\Upsilon$.} 
\label{fig:queso3}
\end{figure}

Note that the variation of the vector field $\nabla h_x$ along each level arc $h_x^{-1}(\ep_j)\cap D_j$ of $\Upsilon$ coincides with $\theta_{2,j}-\theta_{1,j}$, where $\theta_{i,j}$ is the angle that $\nabla h_x$ makes with the positive $x$-axis at $t_{i,j}$. Now, when $\ep_j\to 0$, the set $h_x^{-1}(\ep_j)\cap D_j$ converges to the union of the two arcs $\beta_{1,j},\beta_{2,j}$ in $\parc D_j$ that join the origin with the vertices $s_{1,j}$, $s_{2,j}$. Moreover, around any $p\in \beta_{i,j}\cap \Omega^*$, $i\in \{1,2\}$, the unit normal of $h_x^{-1}(\ep_j)\cap D_j$ that points in the $\nabla h_x$ direction converges as $\ep_j\to 0$ to the interior (resp. exterior) unit normal of $\parc D_j$ at $p$ if $h_x>0$ (resp. $h_x<0$) in $D_j$. Indeed, this is immediate if $\beta_{i,j}$ lies in one of the nodal curves $\gamma_1,\dots,\gamma_s$, and it follows from Lemma \ref{lem:ext} and its proof if $\beta_{i,j}$ is a piece of an umbilical segment $\Gamma_1,\dots, \Gamma_k$.

In this way, when both $\ep_j$ and the radius $r_0$ of $\Omega$ converge to zero, the variation of $\nabla h_x$ along $h_x^{-1}(\ep_j)\cap D_j$ converges to $\Theta_j-\pi  \leq 0$, where $\Theta_j\in [0,\pi]$ is the angle that the analytic arcs $\beta_{1,j},\beta_{2,j}$ make at the origin. 

Therefore, choosing $r_0$ and ${\rm max} (|\ep_j|: j\in \{1,\dots, 2r\})$ small enough, we can assume that the variation of $\nabla h_x$ (or equivalently, of the line field $\cF$) along $h_x^{-1}(\ep_j)\cap D_j$ is smaller than $\Theta_j-\pi + \pi/(8 r)$.

Besides, for such small $r_0$ fixed, the lengths of the circular arcs of $\Upsilon$ contained in $\parc \Omega$ converge to zero as ${\rm max}_j\{|\ep_j|\}\to 0$. Since at any $p\in\alfa_j\cap \Omega^*$ the line field $\cF$ is orthogonal to $\alfa_j$, we deduce that the variation of $\cF$ along each of the $2r$ circular arcs of $\Upsilon$ is smaller than $\pi/(8r)$, choosing ${\rm max}_j\{|\ep_j|\}$ small enough. In this way, for the resulting Jordan curve $\Upsilon =\Upsilon (\ep_j,r_0)$, the total variation $\delta(\cF)$ of 
$\cF$ along $\Upsilon$ is smaller than $2(1-r)\pi + \pi/2\leq \pi/2$: for this, note that $\sum_{j=1}^{2r} \Theta_j = 2\pi$. Now, the quantity $\delta(\cF)/(2\pi)$ gives the topological index ${\rm Ind}(\cF)$ of $\cF$ at $(0,0)$, that must be a half-integer. Thus, since $\delta(\cF)< \pi/2$, we conclude that ${\rm Ind}(\cF)\leq 0$. This proves Lemma \ref{lem:noniso} and Theorem \ref{th:analytic}.
\end{proof}
\end{proof}

\section{Topological index of principal line fields}\label{sec:index}
Let $\mathbb{M}^3$ denote a space form $\R^3, \H^3$ or $\S^3$. Then, for an immersed surface $\Sigma$ in $\M^3$, its principal line fields $\cL_1,\cL_2$ associated to the principal curvatures $\kappa_1\geq \kappa_2$ are well defined and smooth in $\Sigma\setminus \cU$, where $\cU$ is the set of umbilic points of $\Sigma$.

\begin{theorem}\label{th:indice}
Let $\Sigma$ be an immersed real analytic surface in $\M^3$, with $\kappa_1\kappa_2\leq 0$ at each point, and let $q_0\in \cU\subset \Sigma$. Assume that $\Sigma$ is not totally geodesic. Then, there exist two orthogonal real analytic line fields $\cF_1,\cF_2$ defined on a punctured neighborhood $D^*\subset \Sigma$ of $q_0$ with the following properties:
\begin{enumerate}
\item
At any non-umbilic point $p\in D^*$, we have $\cF_1\cup \cF_2=\cL_1\cup \cL_2$. That is, $\cF_1,\cF_2$ point at the principal directions of $\Sigma$ at $p$.
 \item
The topological index at $q_0$ of $\cF_1,\cF_2$ is $\leq 0$.
\end{enumerate}
\end{theorem}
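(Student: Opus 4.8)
The plan is to reduce Theorem \ref{th:indice} to the scalar statement of Theorem \ref{th:analytic}. First I would record that, since $q_0$ is umbilic and $\kappa_1\kappa_2\le 0$, necessarily $\kappa_1(q_0)=\kappa_2(q_0)=0$; thus $q_0$ is a flat point and the second fundamental form $\mathrm{II}$ vanishes at $q_0$. I then represent $\Sigma$ near $q_0$ as a real analytic graph $z=h(x,y)$ over $T_{q_0}\Sigma$, using geodesic normal coordinates of $\M^3$ centred at $q_0$, normalised so that $h(0,0)=0$, $\nabla h(0,0)=(0,0)$ and $D^2h(0,0)=0$. Since $\Sigma$ is real analytic and is \emph{not} totally geodesic, $h$ is real analytic and nonlinear; and the saddle hypothesis $\kappa_1\kappa_2\le0$ translates into the Monge--Amp\`ere inequality $h_{xx}h_{yy}-h_{xy}^2\le 0$ (exactly so in the Euclidean case, and in $\H^3,\S^3$ up to the ambient-curvature corrections addressed below). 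The principal directions of $\Sigma$ depend only on the trace-free part $\mathring{\mathrm{II}}$ of $\mathrm{II}$, and hence are governed by the trace-free part $\mathring{D^2h}$ of the Hessian; correspondingly, $\cU$ is near $q_0$ the zero set of $\mathring{D^2h}$, which by Lemma \ref{lem:recta} and the analytic implicit function theorem is a finite union of analytic arcs through $q_0$, the \emph{umbilical segments} $\Gamma_j$ being genuine line segments.

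Next I would apply Theorem \ref{th:analytic} to this $h$. Part (1) supplies a good direction $\nu$ (outside the finitely many excluded directions of Remark \ref{rem:nu}) together with the segments $\Gamma_j$, with $\nabla h_\nu\neq(0,0)$ off $\cup_j\Gamma_j$; part (2) extends the line field of $\nabla h_\nu$ across each $\Gamma_j$ \emph{orthogonally} (Lemma \ref{lem:ext}) to a real analytic line field $\cF$ on a punctured disk $\Omega^*$; and part (3) gives $\mathrm{Ind}_{(0,0)}(\cF)\le 0$. Taking $\cF^\perp$, I obtain two orthogonal real analytic line fields on $\Omega^*$ of equal index $\le 0$.

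The heart of the matter is to pass from the index of $\cF$ to that of the principal cross field. The key elementary fact is that, for a \emph{trace-free} symmetric $A$, the eigendirections of $A$ bisect $\nu$ and $A\nu$; applied to $A=\mathring{D^2h}$ this says that the principal direction makes half the angle with $\nu$ that $\nabla(\mathring{D^2h})_\nu$ does, whence $\mathrm{Ind}(\text{principal})=\tfrac12\,\mathrm{Ind}(\text{line field of }\mathring{D^2h}\,\nu)$. To remove the trace I deform the symmetric matrix \emph{field} $D^2h=\mathring{D^2h}+\tfrac{\Delta h}{2}I$ to its trace-free part by scaling the trace to zero: the saddle inequality forces $|\mathring{D^2h}|\ge\tfrac12|\Delta h|$, so the determinant stays $\le 0$ all along the deformation, the principal directions (depending only on $\mathring{D^2h}$) are unchanged, and for the generic $\nu$ of Theorem \ref{th:analytic} the deformed vector field stays nonzero on a small circle about $q_0$. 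Consequently the line field of $\nabla h_\nu=D^2h\,\nu$ has the same index as that of $\mathring{D^2h}\,\nu$, giving $\mathrm{Ind}(\cF)=2\,\mathrm{Ind}(\text{principal})$. Therefore the principal line fields $\cL_1,\cL_2$, which off $\cU$ are exactly the eigendirections of $\mathring{D^2h}$, extend analytically across $\cU$ — orthogonally across each $\Gamma_j$, exactly as in Lemma \ref{lem:ext} — to orthogonal real analytic line fields $\cF_1,\cF_2$ on a punctured neighbourhood $D^*$ of $q_0$ with $\cF_1\cup\cF_2=\cL_1\cup\cL_2$ off $\cU$, and with $\mathrm{Ind}(\cF_j)=\tfrac12\,\mathrm{Ind}(\cF)\le 0$. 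This yields both assertions.

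The step I expect to be the main obstacle is precisely this identification. Two points require care. \emph{(i)} One must pass from the intrinsic principal directions of $\Sigma\subset\M^3$ to the Euclidean eigendirections of $\mathring{D^2h}$; this means controlling the induced metric $g=I+\nabla h\otimes\nabla h$ (and, in $\H^3,\S^3$, the ambient curvature terms entering $\mathrm{II}$), which I would absorb by a further homotopy through positive-definite metrics $g_t\to I$ that preserves the sign of $\det(g_t^{-1}\mathrm{II})$ and hence the singularity structure of the eigendirection field. \emph{(ii)} The umbilic set need not be isolated, so the index comparison must be carried out along a Jordan curve crossing the analytic arcs of $\cU$ transversally, invoking the orthogonal extension of Lemma \ref{lem:ext} at each umbilical segment $\Gamma_j$. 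The decisive technical verification — used crucially through the saddle inequality — is that none of these homotopies pushes a zero of the relevant vector field, nor a singularity of the line field, across the chosen small circle; this is what guarantees the index is constant along the deformations and equal to $\tfrac12\,\mathrm{Ind}(\cF)$.
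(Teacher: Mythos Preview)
Your plan follows the paper's route closely: write $\Sigma$ locally as a graph $z=h(x,y)$, homotope the shape operator to the Euclidean Hessian, relate the principal cross field to an auxiliary vector field by an angle-doubling identity, and then compare that vector field with $\nabla h_\nu$ via Theorem~\ref{th:analytic}. Your bisection observation (eigendirections of a trace-free symmetric $A$ bisect $\nu$ and $A\nu$) is exactly equation~\eqref{recan} in the paper; for $\nu=(0,1)$ your $\mathring{D^2h}\,\nu$ is a scalar multiple of the paper's $Z=(-2h_{xy},h_{xx}-h_{yy})$. Your trace-scaling deformation plays the same role as the paper's comparison $\langle Z,\nabla h_y\rangle\le -|\nabla h_y|^2$, and your metric homotopy $g_t\to I$ corresponds to the paper's explicit homotopy \eqref{homoto}, whose nondegeneracy is checked in \eqref{detra}--\eqref{detra2}.

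There is, however, a genuine gap. Your bisection argument produces only a well-defined \emph{cross field} on $\Omega^*$, with index $\tfrac12\,\mathrm{Ind}(\cF)$; since $\mathrm{Ind}(\cF)$ is a priori only a half-integer, you have not shown that this cross field splits globally into two line fields $\cF_1,\cF_2$, which is part of statement~(1). In fact your sentence ``the principal line fields $\cL_1,\cL_2$ \dots\ extend analytically across $\cU$'' is false as stated: when the vanishing order $n$ of $h$ along an umbilical segment $\Gamma_j$ is \emph{odd}, $\cL_1$ and $\cL_2$ get \emph{exchanged} across $\Gamma_j$ (see cases (a),(b) in the paper's Assertion~\ref{ass:1}). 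What is true, and what the paper proves by a monodromy argument, is that since each segment $\Gamma_j$ meets a small circle about $q_0$ exactly twice, these exchanges occur in pairs, so the analytic continuation of $\cF_1$ around the circle returns to $\cF_1$ rather than $\cF_2$; this rules out the obstruction depicted in Figure~\ref{fig:cruces}. Your appeal to Lemma~\ref{lem:ext} does not cover this, since that lemma concerns the line field of $\nabla h_\nu$, not the principal directions. Without this splitting, the index in~(2) is not even well-posed.

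A smaller point: the paper uses the totally geodesic projection \eqref{deffi} rather than geodesic normal coordinates precisely so that $\mathrm{II}$ is a positive scalar multiple of $D^2h$ (equation~\eqref{2ff}); this makes umbilic points correspond \emph{exactly} to $D^2h=0$ and collapses your ``ambient-curvature corrections'' to a positive factor that is absorbed cleanly in the homotopy \eqref{homoto}. In your coordinates those corrections would need to be controlled explicitly before Lemma~\ref{lem:recta} can be invoked to conclude that the umbilical arcs are straight segments.
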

\begin{proof}
We will prove the result for $\M=\S^3$, i.e., the case in which we are most interested. The case $\M=\H^3$ is proved similarly, with an adequate change of sign, using the totally geodesic model for $\H^3$ (see Section \ref{sec:examples}), while the case $\M=\R^3$ follows from the same argument, but simplified.

After an isometry of $\S^3$, we assume $q_0=(1,0,0,0)$. Then, using the totally geodesic model of $\S_+^3$ described in Section \ref{sec:examples}, there exist adequate local coordinates $(x,y)$ in a neighborhood $\Omega\subset \R^2$ of $(0,0)$ such that $\Sigma$ can be parametrized locally around $q_0$ as $\psi(x,y):\Omega\subset \R^2\flecha \S^3$, $$\psi(x,y)= \frac{1}{\sqrt{1+x^2+y^2+h(x,y)^2}}(1,x,y,h(x,y)),$$ where $h(x,y)$ is a real analytic function in $\Omega$ with $h(0,0)=0$ and $Dh(0,0)=(0,0)$; note that in this parametrization $\psi(0,0)=q_0$. We will also assume that $\Sigma$ is not totally geodesic. In the rest of the argument, we will always consider that $\Omega$ is a disk centered at $(0,0)$, as small as necessary.

Denoting $p=h_x$, $q=h_y$, $r=h_{xx}$, $s=h_{xy}$, $t=h_{yy}$, the first fundamental form $I$ of $\Sigma$ in these $(x,y)$-coordinates is given by 
\begin{equation}\label{1ff}
\left(\begin{array}{cc}E & F\\ F& G \end{array} \right)=\frac{1}{(1+x^2+y^2+h^2)^2} \, \cM,
\end{equation} 
where
$$
\cM:=\left(
\def\arraystretch{1.5}\begin{array}{cc}
(h-p x)^2+\left(1+p^2\right) \left(1+y^2\right)  & p \left(q \left(1+x^2+y^2\right)-h y\right)-x (h q+y) \\
 p \left(q \left(1+x^2+y^2\right)-h y\right)-x (h q+y) & (h-q y)^2+\left(1+q^2\right) \left(1+x^2\right) \\
\end{array}
\right).$$
Similarly, the coefficients of the second fundamental form $II$ of $\Sigma$ with respect to $(x,y)$ are
\begin{equation}\label{2ff}
\left(\begin{array}{cc}e & f\\ f & g \end{array} \right) = \frac{1}{\sqrt{1+x^2+y^2+h^2}\sqrt{1+p^2+q^2+(x p+y q -h)^2}}\left(\begin{array}{cc}r & s\\ s& t \end{array} \right).
\end{equation}
In particular, as $\kappa_1\kappa_2\leq 0$, we have $h_{xx}h_{yy}-h_{xy}^2\leq 0$. Note that the umbilical points of $\Sigma$ near $q_0$ correspond to the points $(x,y)\in \Omega$ where $D^2 h$ vanishes. Since $\Sigma$ is not totally geodesic, it follows from Lemma \ref{lem:recta} and the analyticity of $\Sigma$ that either $q_0$ is an isolated umbilical point of $\Sigma$, and in this case $D^2 h$ is never zero in the punctured disk $\Omega^*$, or else the set of umbilical points around $q_0$ corresponds to the union of a finite number of segments $\Gamma_1,\dots, \Gamma_k\subset \Omega$ passing through the origin. In particular, this time $D^2 h$ vanishes only on $\cup_j \Gamma_j$.

The principal line fields $\cL_1,\cL_2$ in the $(x,y)$-coordinates are given by the solutions to
 \begin{equation}\label{rectas1}
 -\alfa_{12}dx^2 + (\alfa_{11}-\alfa_{22}) dx dy + \alfa_{21} dy^2=0,
 \end{equation}
where $(\alfa_{ij}):=II\cdot I^{-1}$, with $I,II$ being the first and second fundamental forms \eqref{1ff}, \eqref{2ff} of $\Sigma$. The line fields given by \eqref{rectas1} are well defined at first only in $\Omega^*\setminus \cup_j \Gamma_j$. We show next that they admit analytic extensions to $\Omega^*$.

\begin{assertion}\label{ass:1}
There exist two real analytic line fields $\cF_1,\cF_2$ in $\Omega^*$ that solve \eqref{rectas1} at every point of $\Omega^*$. Moreover, for every $p\in \Gamma_j\cap \Omega^*$, one of $\cF_1(p),\cF_2(p)$ is tangent to $\Gamma_j$ at $p$, and the other one is normal.
\end{assertion}
\begin{proof}
If there are no umbilical segments $\Gamma_j$, the result is immediate, taking $\cF_i=\cL_i$ for $i=1,2$. So, from now on, we assume that there exists at least one umbilical segment $\Gamma$ that passes through $(0,0)$. Up to a rotation in the $(x,y)$ coordinates, we will assume that such segment is contained in the $x=0$ axis. So, 
\begin{equation}\label{achen}
h(x,y)=x^n \eta(x,y)
\end{equation} 
for some $n>2$ and some real analytic function $\eta(x,y)$ with $\eta(0,y)\neq 0$ for $y\neq 0$ sufficiently small. We remark that the number $n$ measures the vanishing order of $h$ along $\Gamma$, and is independent of the performed rotation in the $(x,y)$ coordinates.

A direct computation using \eqref{1ff}, \eqref{2ff} and \eqref{achen} shows that $\alfa_{ij}= x^{n-2} \hat\alfa_{ij}$, where
\begin{equation}\label{eq:ago}
\left(\begin{array}{cc}\hat\alfa_{11} & \hat \alfa_{12}\\ \hat\alfa_{21}& \hat\alfa_{22} \end{array} \right) = \left(\begin{array}{cc}n(n-1)\eta(0,y)(1+y^2)^{-2} & 0\\ 0 & 0 \end{array} \right) + x \left(\begin{array}{cc} * & *\\ * & * \end{array} \right).
\end{equation}

As $\eta(0,y)\neq 0$ for $y\neq 0$ small enough, we deduce that the equation
 \begin{equation}\label{rectas8}
 -\hat\alfa_{12} dx^2 + (\hat \alfa_{11}-\hat \alfa_{22}) dx dy + \hat\alfa_{21} dy^2=0,
 \end{equation}
for $\hat \alfa_{ij}$ in \eqref{eq:ago}, defines two real analytic line fields $\cF_a,\cF_b$ around every point of the form $(0,y)$ with $y\neq 0$, with the following properties:
\begin{enumerate}
\item[i)]
When $x=0$ and $y\neq 0$, the equation \eqref{rectas8} is reduced to $dx dy=0$. This means that the line fields $\{\cF_a,\cF_b\}$ that solve \eqref{rectas8} are one tangent and the other normal to the segment $\Gamma$ at the points in $\Gamma\cap\Omega^*$. 
\item[ii)]
When $x\neq 0$, the \emph{cross field} given at each $(x,y)$ by $\cF_a\cup \cF_b$ coincides with the cross field determined by the principal lines given by \eqref{rectas1}, i.e., with $\cL_1\cup\cL_2$.
\end{enumerate}

Let $\cF_a$ denote the analytic line field that solves \eqref{rectas8} and is tangent to the $x$-axis (i.e., to $\Gamma)$ at $(0,y)$. Since the solutions to \eqref{rectas8} are eigenlines of the matrix $(\hat\alfa_{ij})$ in \eqref{eq:ago}, we see that $\cF_a$ is associated to the non-zero eigenvalue of $(\hat\alfa_{ij})$ at $(0,y)$, which has the same sign as $\eta(0,y)$.

Let us detect next if, for $(x,y)$ close to $(0,y)$ with $x\neq 0$, we have $\cF_a=\cL_1$ or $\cF_a=\cL_2$ at $(x,y)$. For this, let us recall that $\kappa_1\geq \kappa_2$ by convention, so $\cL_1$ (resp. $\cL_2$) is the principal line field associated to the non-negative (resp. non-positive) principal curvature $\kappa_1$ (resp. $\kappa_2)$. By our previous discussion on $\cF_a$, we have:

\begin{enumerate}
\item[(a)]
If $n$ is even, then at any $(x,y)$ sufficiently close to $(0,y)$ it holds $\cF_a=\cL_1$ if $\eta(0,y)>0$ and $\cF_a = \cL_2$ if $\eta(0,y)<0$. In particular, each of the principal line fields $\cL_1,\cL_2$ extends analytically across $\Gamma$.
\item[(b)]
If $n$ is odd, then at any $(x,y)$ sufficiently close to $(0,y)$ it holds $\cF_a=\cL_1$ if $x \eta(0,y)>0$ and $\cF_a = \cL_2$ if $x\eta(0,y)<0$. In particular, $\cL_1,\cL_2$ do not extend analytically across $\Gamma$, and $\cF_a$ changes from $\cL_1$ to $\cL_2$ (or vice versa) as we cross $\Gamma$.
\end{enumerate}

Note that it follows from properties i) and ii) above that, making $\Omega$ smaller if necessary, the cross field of principal lines $\cL_1\cup \cL_2$, at first only defined on $\Omega^*\setminus \cup_j \Gamma_j$, can be analytically extended to a cross field $\cK$ in $\Omega^*$. We can \emph{locally} write $\cK=\cF_1\cup \cF_2$ where $\cF_1,\cF_2$ are real analytic line fields. We want to show next that $\cK$ can also be \emph{globally} split as the union of two real analytic line fields in $\Omega^*$, i.e., we seek to prove that these local line fields $\cF_1,\cF_2$ can be analytically extended to $\Omega^*$. For that, we need to rule out the possibility depicted in Figure \ref{fig:cruces}. That is, since $\cK$ is well defined on $\Omega^*$, we need to check that if $C$ is a small circle in $\Omega^*$ enclosing the origin and parametrized by $\alfa:[0,1]\flecha\Omega^*$, with $\alfa(0)=\alfa(1)$, then the analytic continuation along $\alfa(t)$ of the line field $\cF_1$ at $\alfa(0)$ ends up being $\cF_1$ (and not $\cF_2$) when $t=1$. 

\begin{figure}[htbp]
        \includegraphics[width=11cm]{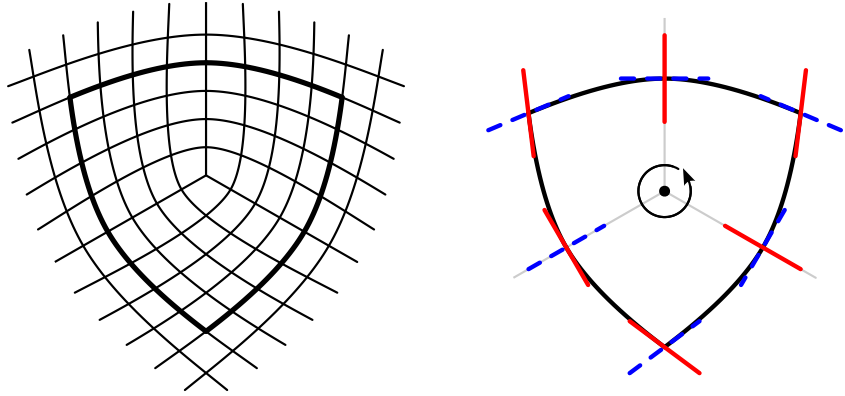} 
     \caption{A well-defined analytic cross field $\cK=\cF_1\cup \cF_2$, with an isolated singularity at the origin, that does not split into two global analytic line fields. As we travel around the origin following the highlighted curve, starting from the upper right corner, the analytic continuation of the \emph{red} local line field of $\cK$ becomes the \emph{blue} one after one turn.} 
\label{fig:cruces}
\end{figure}

For this, assume without loss of generality that $p_0=\alfa(0)=\alfa(1)\in \Omega^*\setminus \cup_j \Gamma_j$, and let $\cF_1$ be one of the local line fields for which $\cK=\cF_1\cup \cF_2$ holds around $p_0$. For any $t\in [0,1]$, we denote by $\cF(t)$ the analytic continuation of $\cF_1$ along $\alfa(t)$, and want to show that $\cF(0)=\cF(1)$. First of all, note that the umbilical segments $\Gamma_1,\dots, \Gamma_r$ divide $\Omega^*$ into $2r$ circular open sectors $D_1,\dots, D_{2r}$. By item ii) above, on each circular arc $C\cap D_{j}$ we have either $\cF(t)=\cL_1(\alfa(t))$ or $\cF(t)=\cL_2(\alfa(t))$. 
Moreover, as $\alfa(t)$ crosses an umbilic segment $\Gamma$ separating $D_j$ from $D_{j+1}$, we have two possibilities:

\begin{enumerate}
\item
If the number $n>2$ associated to $\Gamma$ by \eqref{achen} is \emph{even}, then it follows from item (a) above that we have one of either $\cF(t)=\cL_1(\alfa(t))$ or $\cF(t)=\cL_2(\alfa(t))$ at $D_j\cup D_{j+1}$.
\item
If $n$ is \emph{odd}, then by item (b) above we have $\cF(t)=\cL_1(\alfa(t))$ in $D_j$ and $\cF(t)=\cL_2(\alfa(t))$ in $D_{j+1}$, or vice versa.
\end{enumerate}

Since the circle $C$ intersects each segment $\Gamma$ exactly twice, we deduce from items (1), (2) above that if $\cF(0)=\cL_i(p_0)$ for some $i\in \{1,2\}$, then $\cF(1)=\cL_i(p_0)$ for the same $i$. This proves the desired equality $\cF(0)=\cF(1)$ and prevents the situation in Figure \ref{fig:cruces}.

As a consequence, $\cK=\cF_1\cup \cF_2$, where $\cF_1,\cF_2$ are analytic line fields in $\Omega^*$. Moreover, for every $p\in \Gamma_j\cap \Omega^*$, one of $\cF_1(p),\cF_2(p)$ is tangent to $\Gamma_j$ at $p$, and the other one is therefore normal. This proves Assertion \ref{ass:1}.
\end{proof}

We are going to prove next that the index at $(0,0)$ of the analytic line fields $\cF_1,\cF_2$ defined by Assertion \ref{ass:1} is $\leq 0$. 

To start, let $\nu\in \S^1$ be a direction for which Theorem \ref{th:analytic} holds. Up to a rotation in the $(x,y)$-coordinates, and taking into account Remark \ref{rem:nu}, we can assume that $\nu=(0,1)$, i.e., that 
\begin{equation}\label{nui}
h_{\nu} =h_y,
\end{equation} 
and that none of the umbilical segments $\Gamma_j$ is horizontal, i.e., all of them are transverse to $(1,0)$.

For these $(x,y)$-coordinates we define next a matrix homotopy 
\begin{equation}\label{homoto}
(m_{ij}(t))= (1-t) (\alfa_{ij}) + t (h_{ij}),
\end{equation} 
where $(h_{ij})=D^2h$, and consider for each $t\in [0,1]$ the equation
 \begin{equation}\label{rectas3}
 -m_{12}(t) dx^2 + (m_{11}(t)-m_{22}(t)) dx dy + m_{21}(t) dy^2=0.
 \end{equation}
Note that \eqref{rectas3} coincides with \eqref{rectas1} for $t=0$, while for $t=1$ is given by
\begin{equation}\label{rectas2}
-h_{xy}(dx^2 -dy^2) + (h_{xx}-h_{yy}) dx dy=0.
\end{equation} 
We are going to show in Assertion \ref{ass:2} below that the solutions to \eqref{rectas3} define, for each $t\in [0,1]$, a pair of real analytic line fields $\cF_1^t,\cF_2^t$ in $\Omega^*$. This result is in analogy with Assertion \ref{ass:1}, that actually corresponds to Assertion \ref{ass:2} in the particular case $t=0$. Once we prove this, such line fields will have an associated index at the origin that, by topological invariance of the index, must be the same for every $t\in [0,1]$. Subsequently, we will prove in Assertion \ref{ass:3} that the index of the solutions to \eqref{rectas2}, i.e. of \eqref{rectas3} for $t=1$, is $\leq 0$. From there, we obtain the desired result that the index at the origin of the line fields $\cF_1=\cF_1^0$ and $\cF_2=\cF_2^0$ is $\leq 0$.

We start with:

\begin{assertion}\label{ass:2}
For each $t\in [0,1]$, there exist two real analytic line fields $\cF_1^t,\cF_2^t$ in $\Omega^*$ that solve \eqref{rectas3} at every point of $\Omega^*$. Moreover, for every $p\in \Gamma_j\cap \Omega^*$, one of $\cF_1^t(p),\cF_2^t(p)$ is tangent to $\Gamma_j$ at $p$, and the other one is normal.

\end{assertion}
\begin{proof}
By its own construction, and taking into account \eqref{2ff}, the matrix $(m_{ij}(t))$ is of the form 
\begin{equation}\label{matm}
(m_{ij}(t))=(h_{ij})\cdot P(t),
\end{equation} 
where $P(t)$ is positive definite. Equation \eqref{rectas3} defines a pair of straight lines passing through the origin except at the points $p\in \Omega$ where 
\begin{equation}\label{detra}
{\rm det}(m_{ij}(t)) =0={\rm trace}(m_{ij}(t)).
\end{equation} 
Let $\landa_1,\landa_2$ be the eigenvalues of $(h_{ij})$, $Q$ a matrix that diagonalizes $(h_{ij})$, and $\Delta:=Q P(t) Q^{-1}$. Since $P(t)$ is positive definite, we have from \eqref{matm} that 
\begin{equation}\label{detra2}
 {\rm trace}(m_{ij}(t)) ={\rm trace}(Q (h_{ij})Q^{-1} Q P(t) Q^{-1})= \landa_1 \Delta_{11} + \landa_2 \Delta_{22},
 \end{equation} 
with $\Delta_{11}>0, \Delta_{22}>0$. In particular, if ${\rm det}(m_{ij}(t)) =0$, we have $\landa_1 \landa_2=0$ by \eqref{matm}, and so from \eqref{detra2}, ${\rm trace}(m_{ij}(t)) = \delta^2\,  {\rm trace}(h_{ij})$ for some $\delta>0$. This shows that \eqref{detra} can only happen at some $p\in \Omega$ if $(h_{ij})$ vanishes at $p$. Therefore, we conclude that the solutions to \eqref{rectas3} define a real analytic cross field on $\Omega^*\setminus \cup_j \Gamma_j$.

Along the umbilical segments $\Gamma_j$, the same analytic continuation argument used in the proof of Assertion \ref{ass:1} still works in this case. For instance, assume that $\Gamma_j$ is contained in the line $x_{\theta}=0$, where we are denoting $$x_{\theta}:=\cos \theta x+\sin\theta y, \hspace{0.5cm} y_{\theta}:= -\sin \theta x + \cos \theta y.$$ Then, similarly to \eqref{achen}, we have $h(x,y)=x_{\theta}^n \,\eta(x_{\theta},y_{\theta})$ for some $n>2$, and the equation corresponding to \eqref{eq:ago} in this more general context is, by \eqref{homoto} and \eqref{eq:ago},
$$(m_{ij}(t))= x_{\theta}^{n-2}\left[ \left(\begin{array}{cc}n(n-1)\eta(0,y_{\theta})\left((1-t)(1+y_{\theta}^2)^{-2} +t\right)& 0\\ 0 & 0 \end{array} \right) + x_{\theta} \left(\begin{array}{cc} * & *\\ * & * \end{array} \right)\right].$$
From here, and arguing as in Assertion \ref{ass:1} using this time \eqref{rectas3}, we can deduce that, for each $t\in [0,1]$, the solutions to \eqref{rectas3} can be extended to define an analytic cross field $\cK^t$ on $\Omega^*$. Since we had previously shown that the cross field $\cK=\cK^0$ can be described as $\cK=\cF_1\cup \cF_2$ where both $\cF_i$ are analytic line fields on $\Omega^*$, it follows by a continuity argument that for all $t\in [0,1]$ we have $\cK^t =\cF_1^t\cup \cF_2^t$, for two analytic line fields $\cF_1^t,\cF_2^t$ in $\Omega^*$. Note that $\cF_i^0 =\cF_i$, where $\cF_1,\cF_2$ are the line fields constructed above. Moreover, similarly to item i) in Assertion \ref{ass:1}, at each $p\in \Gamma_j\cap\Omega^*$ one of these line fields $\cF_i^t(p)$ is tangent to $\Gamma_j$ and the other one is orthogonal to $\Gamma_j$. In particular, this holds for \eqref{rectas2}, making $t=1$.
\end{proof}

Assertion \ref{ass:2} shows that all the analytic line fields $\cF_i^t$ have a well defined topological index at the origin. We next apply Theorem \ref{th:analytic} to control this index for $t=1$.
\begin{assertion}\label{ass:3}
The topological index of the line fields $\cF_1^1,\cF_2^1$ at the origin is $\leq 0$
\end{assertion}
\begin{proof}
We start by considering the vector field 
\begin{equation}\label{defz}
Z=\left(-2h_{xy},h_{xx}-h_{yy}\right),
\end{equation} 
which has no zeros on $\Omega^*\setminus \cup_j \Gamma_j$ since $h_{xx}h_{yy}-h_{xy}^2\leq 0$. In order to understand the behavior of $Z$ along an umbilic segment $\Gamma_j$, we assume that $\Gamma_j$ is contained in a line $\cos \theta x + \sin \theta y=0$, and so $h$ can be written as $$h(x,y)=(\cos \theta x+\sin \theta y)^n \, \eta(x,y)$$ for some $n>2$, with $\eta(x,y)\neq 0$ for $(x,y)\in \Gamma_j\cap \Omega^*$. Then, arguing as in Lemma \ref{lem:ext}, we obtain a similar formula to \eqref{fraca} in this context:
\begin{equation}\label{fraka2}
\frac{Z(x,y)}{|Z(x,y)|} \rightarrow \pm (-\sin (2\theta),\cos(2\theta))
\end{equation}
as $(x,y)$ approaches a point $p\in \Gamma_j\cap \Omega^*$ through a trajectory that lies on one side of $\Gamma_j$. Here, as in \eqref{fraca}, the $\pm$ sign depends on the parity of $n$, the sign of $\eta(p)$ and the sign of $\cos \theta x + \sin \theta y $ along the chosen trajectory. 

Let now $\cL_Z$ denote the line field generated by $Z$. While $\cL_Z$ is at first only defined when $Z\neq 0$, i.e., in $\Omega^*\setminus \cup_j \Gamma_j$, the behavior of $Z$ at each $\Gamma_j$ given by \eqref{fraka2} implies that $\cL_Z$ can be extended to a real analytic line field in $\Omega^*$, that we still denote by $\cL_Z$. Let ${\rm Ind}(\cL_z)$ denote the topological index of $\cL_Z$ at the origin. We next relate ${\rm Ind}(\cL_Z)$ with the topological index of $\cF_1^1$ and $\cF_2^1$.

Let $\nu=(\cos \tau,\sin\tau)\equiv (dx,dy)$ be a solution to \eqref{rectas2} at a point $p\in \Omega^*\setminus \cup_j \Gamma_j$, and let $(\cos\theta,\sin\theta)$ denote $Z(p)/|Z(p)|$. Then, \eqref{rectas2} can be rewritten using \eqref{defz} as 
\begin{equation}\label{recan}
\cos(\phi + 2\tau) =0.
\end{equation} 
By the analyticity of $\cL_Z$ and $\cF_i^1$ in $\Omega^*$ we deduce then from \eqref{recan} that $${\rm Ind}(\cL_Z)= 2\, {\rm Ind}(\cF_1^1) = 2\,{\rm Ind}(\cF_2^1).$$

Next, recall that for our choice of $(x,y)$-coordinates, the vector field $\nabla h_y$ is in the conditions of Theorem \ref{th:analytic}, see \eqref{nui}. In particular, by Theorem \ref{th:analytic}, $\nabla h_y$ generates a line field $\cF$ in $\Omega^*$ that has non-positive topological index at the origin. We next compare $Z$ with $\nabla h_y$. First, by $h_{xx} h_{yy}-h_{xy}^2\leq 0$ we have $$\esiz Z,\nabla h_y\esde \leq -|\nabla h_y|^2 \leq 0.$$ By Theorem \ref{th:analytic}, the set of critical points of $h_y$ coincides with $\cup_j \Gamma_j$, so $\esiz Z,\nabla h_y\esde<0$ on $\Omega^*\setminus \cup_j \Gamma_j$. That is, $\cF$ and $\cL_Z$ are never orthogonal in $\Omega^*\setminus \cup_j \Gamma_j$.

Consider now some umbilical segment $\Gamma_j$, that we assumed contained in $\cos \theta x + \sin \theta y=0$, and take $p\in \Gamma_j\cap \Omega^*$. By Theorem \ref{th:analytic}, the line field $\cF(p)$ is orthogonal to $\Gamma_j$ at $p$, and by \eqref{fraka2} the line field $\cL_Z(p)$ points in the direction $(-\sin(2\theta),\cos(2\theta))$ at $p$. Since $\Gamma_j$ is not horizontal (see our working conditions after \eqref{nui}), we deduce that $\cF$ and $\cL_Z$ are still not orthogonal at $p$. Thus, $\cF$ and $\cL_Z$ are never orthogonal in $\Omega^*$, and so they have the same rotation index at $(0,0)$ and we conclude that
$$ 2\, {\rm Ind}(\cF_1^1) = 2\,{\rm Ind}(\cF_2^1)= {\rm Ind}(\cL_Z)={\rm Ind}(\cF)\leq 0,$$where the last inequality comes from Theorem \ref{th:analytic}. This proves Assertion \ref{ass:3}.
\end{proof}

We can now finish the proof of Theorem \ref{th:indice}. By Assertion \ref{ass:2}, all the line fields $\cF_i^t$, $i=1,2$, have a well defined topological index at the origin. By the topological invariance of the index and Assertion \ref{ass:3}, we have ${\rm Ind}(\cF_i^t)\leq 0$ for all $t\in [0,1]$ and $i=1,2$. Since $\cF_i^0=\cF_i$, we conclude that ${\rm Ind}(\cF_i)\leq 0$. This completes the proof of Theorem \ref{th:indice}.
\end{proof}

\begin{corollary} {\bf (Theorem \ref{th:main1})}
Any real analytic sphere $\Sigma$ immersed in $\S^3$ with $\kappa_1 \kappa_2\leq 0$ at every point is totally geodesic.
\end{corollary}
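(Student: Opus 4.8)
The plan is to derive Theorem \ref{th:main1} (the final Corollary) from the local index control established in Theorem \ref{th:indice}, combined with the Poincar\'e--Hopf theorem. Suppose, arguing by contradiction, that $\Sigma$ is an immersed real analytic sphere in $\S^3$ with $\kappa_1\kappa_2\leq 0$ everywhere, but $\Sigma$ is \emph{not} totally geodesic (equatorial). The strategy is to manufacture a globally defined real analytic line field on $\Sigma$ whose total index, by Poincar\'e--Hopf, must equal the Euler characteristic $\chi(\S^2)=2>0$, and then to contradict this by showing every singularity contributes a non-positive index.

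The first step is to understand the umbilic set $\cU\subset\Sigma$. Since $\Sigma$ is real analytic and not totally geodesic, the set where $D^2h$ vanishes (in the local totally geodesic chart of Section \ref{sec:examples}) is, by Lemma \ref{lem:recta} and analyticity, either isolated or a finite union of analytic arcs which are in fact \emph{line segments} in the chart; globally, $\cU$ is a finite union of real analytic arcs meeting at finitely many points, together with possibly isolated umbilics. The crucial local input is Theorem \ref{th:indice}: around \emph{every} point $q_0\in\cU$ the principal cross field $\cL_1\cup\cL_2$, initially defined only on $\Sigma\setminus\cU$, extends across the umbilic arcs to a genuine analytic cross field, and moreover splits into two orthogonal analytic line fields $\cF_1,\cF_2$ on a punctured neighborhood of $q_0$, each with topological index $\leq 0$ at $q_0$.

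The second step is the \emph{globalization}: I would patch these local extensions together. On $\Sigma\setminus\cU$ the principal directions give a well-defined cross field; Theorem \ref{th:indice} shows it extends analytically across each umbilic arc, so we obtain a global analytic cross field $\cK$ on all of $\Sigma$ minus a finite singular set $\{q_1,\dots,q_k\}$ (the isolated umbilics and the meeting points of the umbilic arcs, where the local splitting may genuinely fail). The key observation is that a cross field on an orientable surface either splits globally into two line fields or, if it does not, one can still apply the index count via the associated quadratic differential / the doubled line field; since $\S^2$ is simply connected one can choose one of $\cF_1,\cF_2$, say $\cF:=\cF_1$, as a globally defined analytic line field on $\Sigma\setminus\{q_1,\dots,q_k\}$. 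Applying Poincar\'e--Hopf for line fields on $\Sigma\cong\S^2$ gives $\sum_{i=1}^k {\rm Ind}_{q_i}(\cF)=\chi(\Sigma)=2$. But Theorem \ref{th:indice} guarantees ${\rm Ind}_{q_i}(\cF)\leq 0$ at each singularity, forcing $\sum_i {\rm Ind}_{q_i}(\cF)\leq 0<2$, a contradiction. Hence $\Sigma$ must be totally geodesic.

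The main obstacle I anticipate is the globalization step, specifically ensuring that the local cross field $\cK$ truly assembles into a single-valued analytic line field $\cF$ whose singularities form a finite set on which Poincar\'e--Hopf is applicable. Concretely one must verify that the finitely many umbilic arcs partition $\Sigma$ into finitely many regions, that the meeting points are isolated singularities of $\cK$, and — crucially — that no monodromy obstruction prevents choosing a consistent global line field (the situation ruled out locally in Figure \ref{fig:cruces} must also be excluded along global loops). Because $\Sigma\cong\S^2$ is simply connected, any such monodromy obstruction must be trivial, which is what makes the genus-zero hypothesis indispensable; this is precisely where the topological hypothesis of Theorem \ref{th:main1} enters. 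Once the global line field $\cF$ with finitely many non-positive-index singularities is in place, the contradiction with $\chi(\S^2)=2$ is immediate.
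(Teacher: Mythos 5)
Your overall strategy coincides with the paper's: extend the principal cross field across the umbilic set, split it into two global analytic line fields with finitely many singularities, invoke Theorem \ref{th:indice} to get non-positive index at each singularity, and contradict Poincar\'e--Hopf on $\S^2$. The gap is in the globalization step, which you yourself identify as the main obstacle: the justification ``because $\Sigma\cong\S^2$ is simply connected, any such monodromy obstruction must be trivial'' does not work as stated. The cross field $\cK$ is defined on $\Sigma\setminus\{q_1,\dots,q_k\}$, which is \emph{not} simply connected, and a cross field on a punctured surface need not split into two line fields --- the horizontal directions of a quadratic differential with a simple zero give the standard counterexample, which is exactly the situation of Figure \ref{fig:cruces}. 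The monodromy is a homomorphism $\pi_1(\Sigma\setminus\{q_1,\dots,q_k\})\to\Z/2$, and the simple connectivity of the \emph{unpunctured} sphere says nothing about it. Some additional input is required to show this homomorphism vanishes on a generating set.

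The paper supplies that input concretely: by Lemma \ref{lem:recta} and equation \eqref{2ff}, the one-dimensional part of $\cU$ consists of closed geodesic circles $\Gamma_j$ of $\S^3$ contained in $\Sigma$; the analytic continuation of $\cL_1$ swaps with $\cL_2$ only upon crossing a $\Gamma_j$ of odd vanishing order; and, because $\Sigma$ has genus zero, each $\Gamma_j$ \emph{separates} $\Sigma$, so any two transverse paths with the same endpoints cross each $\Gamma_j$ with the same parity. That parity argument is where the topological hypothesis genuinely enters. Your approach can be repaired with material you already cite, but you must say so explicitly: Assertion \ref{ass:1} inside Theorem \ref{th:indice} shows that $\cK$ splits into two line fields on a punctured neighborhood of \emph{each} $q_i$, i.e.\ the monodromy is trivial on every peripheral loop, and on a genus-zero surface the peripheral loops generate the fundamental group of the complement of finitely many points (this fails in higher genus, which is again where genus zero is used). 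Alternatively, your parenthetical remark about working directly with the doubled line field/quadratic differential would let you avoid the global splitting altogether, since Poincar\'e--Hopf for cross fields with half-integer indices still forces the total index to equal $2$ while Theorem \ref{th:indice} makes each local contribution $\leq 0$. As written, however, the decisive step is asserted rather than proved.
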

\begin{proof}
Since $\kappa_1\kappa_2\leq 0$, the umbilical points of $\Sigma$ are those where $II=0$. Assume that $\Sigma$ is not totally geodesic. Then, by analyticity, its umbilic set $\cU$ is a finite union of isolated points and closed, regular embedded real analytic curves in $\Sigma$. These curves are actually geodesics of $\S^3$ contained in $\Sigma$, by Lemma \ref{lem:recta} and equation \eqref{2ff}. We denote them by $\Gamma_j$, as usual.

Let $\cK=\cL_1\cup \cL_2$ denote the \emph{cross field} on $\Sigma\setminus \cU$ defined by the two principal line fields $\cL_1,\cL_2$ of $\Sigma$. We proved in Theorem \ref{th:indice} that for any $p\in \Sigma$ there exists a punctured neighborhood $D^*(p)$ to which $\cK$ can be analytically extended as a cross field. By compactness of $\Sigma$, we deduce then that $\cK$ can be extended to an analytic cross field on $\Sigma\setminus \{q_1,\dots, q_s\}$, for a certain finite number of points $q_i\in \Sigma$.

Let now $p\in \Sigma\setminus \cU$ be an arbitrary non-umbilic point. Let $\alfa,\gamma$ be two trajectories from $p$ to some point $p'\in \Sigma\setminus \{q_1,\dots , q_s\}$, and let $\cF_1^{\alfa}$ (resp. $\cF_1^{\gamma}$) be the analytic continuation of $\cL_1(p)$ along $\alfa$ (resp. $\gamma$). We assume for simplicity that $\alfa,\gamma$ intersect transversely all geodesics $\Gamma_j$, and that $p'\not\in \cup_j \Gamma_j$. In these conditions, \emph{we claim that}
\begin{equation}\label{fag}
\cF_1^{\alfa}(p')=\cF_1^{\gamma}(p').
\end{equation}

Indeed, each of the totally geodesic circles $\Gamma_j$ separates $\Sigma$ into two connected components, since $\Sigma$ is diffeomorphic to $\S^2$. In particular, by transversality, the number of points in $\alfa\cap \Gamma_j$ is odd (resp. even) if and only if so is the number of points in $\gamma\cap \Gamma_j$. In addition, we proved in Theorem \ref{th:indice} that, for each $\Gamma_j$, one of the following two possibilities holds (see (a) and (b) in the proof of Theorem \ref{th:indice}):
\begin{enumerate}
\item[A)] $\cL_1$ and $\cL_2$ extend analytically across $\Gamma_j$. In particular, $\cF_1^{\alfa}$ is equal to one of $\cL_1$ or $\cL_2$ along a small arc of $\alfa$ around each $p\in \alfa\cap \Gamma_j$, and the same holds for $\gamma$ and $\cF_1^{\gamma}$.
\item[B)] $\cL_1$ and $\cL_2$ do not extend analytically across $\Gamma_j$. In that case, for any small arc of $\alfa$ containing $p\in \alfa\cap \Gamma_j$, the line field $\cF_1^{\alfa}$ is equal to $\cL_1$ on one side of $p$, and to $\cL_2$ on the other side of $p$. The same property holds for $\cF_1^{\gamma}$.
\end{enumerate}

In this way, it follows from A), B) and the equal parity of the sets $\alfa\cap \Gamma_j$ and $\gamma\cap \Gamma_j$ for every $\Gamma_j$ that \eqref{fag} holds.

The path independence property \eqref{fag} clearly implies that the local line fields $\cL_1,\cL_2$ around $p$ can be extended analytically to define respective global line fields $\cF_1,\cF_2$ on $\Sigma\setminus \{q_1,\dots , q_s\}$, with the property that $\cK=\cF_1\cup \cF_2$ at every point. These line fields $\cF_1,\cF_2$ obviously agree with the analytic line fields defined on Theorem \ref{th:indice} around each singularity $q_i$. So, by Theorem \ref{th:indice}, the topological index of both $\cF_1,\cF_2$ around each $q_i$ is $\leq 0$. This contradicts that, by Poincaré-Hopf, $\sum_{i=1}^s {\rm Ind}(\cF_1,q_i)={\rm Ind}(\cF_2,q_i)=2$, since $\Sigma$ is diffeomorphic to $\S^2$. This contradiction shows that $\Sigma$ must be totally geodesic, and completes the proof.
\end{proof}

\def\refname{References}

\vskip 0.2cm

\noindent José A. Gálvez

\noindent Departamento de Geometría y Topología,\\ Instituto de Matemáticas IMAG,\\
Universidad de Granada (Spain).

\noindent  e-mail: {\tt jagalvez@ugr.es}

\vskip 0.2cm

\noindent Pablo Mira

\noindent Departamento de Matemática Aplicada y Estadística,\\ Universidad Politécnica de Cartagena (Spain).

\noindent  e-mail: {\tt pablo.mira@upct.es}

\noindent Marcos Paulo Tassi

\noindent Dipartimento di Ingegneria e Scienze dell'Informazione e Matematica, \\ Università degli Studi dell'Aquila (Italy)

\noindent  e-mail: {\tt marcospaulo.tassi@univaq.it}

\noindent This research has been financially supported by: Projects PID2020-118137GB-I00 and CEX2020-001105-M, funded by MCIN/AEI /10.13039/501100011033; Junta de Andalucia grant no. P18-FR-4049; CARM, Programa Regional de Fomento de la Investigación, Fundación Séneca-Agencia de Ciencia y Tecnología Región de Murcia, reference 21937/PI/22; and Projects 2020/03431-6 and 2021/10181-9, funded by São Paulo Research Foundation (FAPESP).

\end{document}